\newcommand{\bbR}{\mathbb{R}}
\newcommand{\bbC}{\mathbb{C}}
\newcommand{\bbZ}{\mathbb{Z}}
\newcommand{\idd}{\sqrt{-1}\partial\bar\partial}
\DeclareMathOperator{\Vol}{Vol}
\DeclareMathOperator{\supp}{supp}
\DeclareMathOperator*{\esssup}{ess\,sup}
\newtheorem{lemma}{Lemma}
\newtheorem{theorem}{Theorem}
\newtheorem{proposition}{Proposition}
\newtheorem{question}{Question}
\newtheorem{conjecture}{Conjecture}
\newtheorem{definition}{Definition}\theoremstyle{definition}
\newtheorem{remark}{Remark}\theoremstyle{definition}
\title{Mutual asymptotic Fekete sequences}
\author{Jakob Hultgren}
\date{}
\begin{document}

\maketitle

\abstract{
A sequence of point configurations on a compact complex manifold is asymptotically Fekete if it is close to maximizing a sequence of Vandermonde determinants. These Vandermonde determinants are defined by tensor powers of a Hermitian ample line bundle and the point configurations in the sequence possess good sampling properties with respect to sections of the line bundle. In this paper, given a collection of toric Hermitian ample line bundles, we give necessary and sufficient condition for existence of a sequence of point configurations which is asymptotically Fekete (and hence possess good sampling properties) with respect to each one of the line bundles. When they exist, we also present a way of constructing such sequences. As a byproduct we get a new equidistribution property for maximizers of products of Vandermonde determinants.
}

\section{Introduction}\label{sec:Introduction}
\subsection{Fekete Configurations}\label{sec:IntroFek}
Let $X$ be a compact Kähler manifold with a Kähler form $\omega$. Let $L$ be an ample line bundle over $X$ and $\phi$ be the weight of a continuous Hermitian metric $|\cdot|$ on $L$. For each positive integer $k$, let $H^0(X,kL)$ be the space of holomorphic sections of the $k$'th tensor power $kL$ of $L$. Letting $N=N_{kL}$ denote the dimension of $H^0(X,kL)$, fixing a basis $\{s_1,\ldots,s_{N}\}$ for $H^0\left(X,kL\right)$ determines a function $|D|^2_\phi = |D^{(k)}|^2_\phi$ on $X^N$ given by the Vandermonde determinant
\begin{equation}
    \label{eq:VandermondeExplicit}
    |D(x_1,\ldots,x_{N})|^2_\phi = |\det(s_i(x_l))_{i,l}|^2e^{-2k \sum_{l=1}^{N} \phi(x_l)}.
\end{equation}
Here we have used the notational convention of identifying each section $s_i$ as well as the weight $\phi$ with its local representatives. These representatives are only valid locally, but the expression in \eqref{eq:VandermondeExplicit} is well-defined on $X^N$ (See for example Section~2.1 in \cite{BB} for details). Put differently, the basis $\{s_1,\ldots,s_{N}\}$ determines a section on the determinant bundle of the direct product vector bundle $(kL)^{\times N}$ over $X^{N}$ and \eqref{eq:VandermondeExplicit} is the pointwise norm of this section with respect to the Hermitian metric induced by $\phi$. 

Note that, by compactness of $X^{N}$, 
$$ \sup_{X^{N}}|D|^2_{\phi} = \max_{X^N} |D|^2_{\phi} <\infty. $$ 
Fekete points, or \emph{Fekete configurations}, of $(L,\phi)$ are elements $(x_1,\ldots,x_N)\in X^N$ which maximize $|D|^2_\phi$. It is easy to verify that although $|D|^2_\phi$ depends on the choice of basis for $H^0(X,kL)$, the Fekete configurations don't. When $X=\mathbb P^1$ and $L=\mathcal O(1)$, Fekete configurations coincide with the equilibrium states of a systems of particles with Coulomb interactions. In the general case, Fekete configurations can be thought of as optimal sampling configurations. Indeed, they maximize the norm of the determinant of the evaluation map $e_{(x_1,\ldots,x_N)}$ on $H^0(X,kL)$
$$ e_{(x_1,\ldots,x_N)}(s) = (s(x_1),\ldots,s(x_L)). $$
Consequently, they provide point configurations such that the feasibility of the numerical problem of inverting the evaluation map (and thus determining $s$ from $s(x_1),\ldots,s(x_N)$) is good. See for example Chapter~9 in \cite{L} for more background related to this. 

Let $\{P_k\}_{k=1}^\infty$ be a sequence such that for each $k$, $P_k\in X^N$ and $P_k$ is a Fekete configuration, i.e.
\begin{equation} \label{eq:FeketeDef} |D(P_k)|_\phi = \max_{X^N} |D|_\phi \end{equation}
An important theme has been to study the asymptotic behaviour of $P_k$ as $k\rightarrow \infty$. We will use $\delta^N$ to denote the function from $X^N$ to the space of probability measures on $X$ given by
$$ (x_1,\ldots,x_N) \mapsto \frac{1}{N}\sum_{i=1}^N \delta_{x_i}. $$
The probability measure $\delta^N(P_k)$ is the \emph{empirical measure} of $P_k$. The well known equidistribution property for Fekete points says that
\begin{equation}
\label{eq:EquiDistm1}
    \delta^N(P_k) \rightarrow \mu_{(L,\phi)}
\end{equation}
in the weak topology of measures on $X$, where $\mu_{(L,\phi)}$ is the equilibrium measure of $(L,\phi)$, in other words the (normalized) non-pluripolar Monge-Ampère measure of $\mathcal P(\phi)$, where $\mathcal P(\phi)$ is the natural projection of $\phi$ onto the set of semi-positive metrics on $L$. For $n\geq 2$ the convergence in \eqref{eq:EquiDistm1} was proved in \cite{BBWN}, settling a long standing conjecture (see also \cite{MO} for the closely related case of spherical polynomials on the sphere). 

Relaxing \eqref{eq:FeketeDef}, we get the following asymptotic condition on a sequence of point configurations $\{P_k\}_{k=1}^\infty$
\begin{equation}
\label{eq:AssFeketeDef}
    \lim_{k\rightarrow \infty} \frac{1}{kN} \log |D(P_k)|^2_\phi = \lim_{k\rightarrow \infty} \frac{1}{kN} \log \max_{X^N} |D|^2_\phi.
\end{equation}
Sequences $\{P_k\}_{k=1}^\infty$ satisfying \eqref{eq:AssFeketeDef} are said to be \emph{asymptotically Fekete} with respect to $(L,\phi)$ and this condition is in fact sufficient to guarantee the equidistribution property in \eqref{eq:EquiDistm1} (see \cite{BBWN}, Theorem~C and the discussion after it). Note that the limit in the right hand side of \eqref{eq:AssFeketeDef} exist and is $<+\infty$ (see for example \cite{BB}).

It will be convenient to extend the definition of Vandermonde determinants to objects of the form $(L,\phi) = (\lambda L',\lambda \phi')$ where $\lambda$ is a positive real number and $(L',\phi')$ is an ample line bundle with a continuous Hermitian metric. 
We may without loss of generality assume that there is no representation $L=\lambda'L''$ such that $\lambda'>\lambda$ and $L''$ is a line bundle. Letting $\left\lfloor k\lambda \right\rfloor$ be the largest integer less than or equal to $k\lambda$, we define the associated Vandermonde determinant $| D|^2_\phi$ as
$$ |\det(s_i(x_l))_{i,l}|^2e^{- 2\left\lfloor k\lambda \right\rfloor\sum_{l=1}^{N} \phi'(x_l)} $$
where $N=N_L(k)=\dim H^0(X,\left\lfloor k\lambda \right\rfloor L')$ and 
$\{s_1,\ldots,s_N\}$ is a basis of $H^0(X,\left\lfloor k\lambda \right\rfloor L')$. Note that the equidistribution property for Fekete points extends to this setting. 

\subsection{Mutual Asymptotic Fekete Configurations}
\label{subsec:MutualAsy}
In this paper we will consider $m$ pairs $(L_1,\phi_1),\ldots,(L_m,\phi_m)$ where each $L_j$ is (a positive real multiple of) an ample line bundle over $X$ and each $\phi_j$ is (a positive real multiple of) a continuous Hermitian metric on $L_j$. 
Our main result concerns the existence of point configurations that have good sampling properties for all the vector spaces 
$$ H^0(X,kL_1), \ldots, H^0(X,kL_m). $$
For each $j\in \{1,\ldots,m\}$ we will let $N_j=N_{kL_j}$ and $|D_j|^2_{\phi_j}$ be the Vandermonde determinant function on $X^{N_j}$ determined by $(L_j,\phi_j)$. 

Let $N=\max_j N_j$. For $P=(x_1,\ldots,x_N)\in X^N$ we will let $|D_j(P)|^2_\phi$ denote the quantity
$$ |D_j(P)|^2_\phi = |D_j(x_1,\ldots,x_{N_j})|^2_\phi $$
which only depend on the first $N_j$ entries of $P$. Assume 
\begin{equation}
    L_1^n=\ldots=L_m^n.
    \label{eq:VolumeNormIntro}
\end{equation} 
Then the the growth of the quantities $N_1,\ldots,N_m$ as $k\rightarrow \infty$ are comparable. More precisely, $\lim_{k\rightarrow \infty} N_j/N = 1$ for all $j\in\{1,\ldots,m\}$. We will ask the following question:
\begin{question}
\label{qu:MutualAssSeq}
Does there exist a sequence $\{P_k\}_{k=1}^\infty$ which is asymptotically Fekete with respect to $(L_j,\phi_j)$ for all $j\in \{1,\ldots,m\}$, i.e.
\begin{equation}
    \label{eq:MutualAsyFekete}
\lim_{k\rightarrow \infty} \frac{1}{kN} \log |D_j(P_k)|^2_\phi = \lim_{k\rightarrow \infty} \frac{1}{kN} \log \max_{X^N} |D_j|^2_\phi. 
\end{equation}
for all $j\in \{1,\ldots,m\}$?
\end{question}
To accommodate collections of line bundles that don't satisfy \eqref{eq:VolumeNormIntro}, we will normalize their volumes. For each $j\in\{1,\ldots,m\}$, let $c_j=(L_j^n)^{1/n}$. We will write that a sequence $\{P_k\}_{k=1}^\infty$ is a \emph{mutual asymptotic Fekete sequence} for $(L_1,\phi_1),\ldots,(L_m,\phi_m)$ if it satisfies \eqref{eq:MutualAsyFekete} for the normalized pair
$\left(\frac{1}{c_j}L_j,\frac{1}{c_j}\phi_j\right)$, $j\in \{1,\ldots,m\}$.

The equidistribution property \eqref{eq:EquiDistm1} gives a necessary condition for existence of a mutual asymptotic Fekete sequence, namely:
\begin{lemma}
\label{lem:NesCond}
Assume the pairs $(L_1,\phi_1),\ldots,( L_m,\phi_m)$ admits a mutual sequence of asymptotic Fekete configurations. Then 
\begin{equation} 
\label{eq:NesCond}
\mu_{(L_1,\phi_1)} = \ldots = \mu_{(L_m,\phi_m)}. 
\end{equation}
\end{lemma}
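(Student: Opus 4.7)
The plan is to derive the conclusion directly from the equidistribution property for asymptotically Fekete sequences (recalled after \eqref{eq:AssFeketeDef} and attributed to Theorem~C of \cite{BBWN}) applied to each bundle separately, then match the resulting limits using $N_j/N\to 1$.

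Fix $j\in\{1,\dots,m\}$. By assumption $\{P_k\}$ is asymptotically Fekete for the normalized pair $\bigl(\tfrac{1}{c_j}L_j,\tfrac{1}{c_j}\phi_j\bigr)$. Since $|D_j(P_k)|^2_\phi$ depends only on the first $N_j$ coordinates of $P_k\in X^N$, the truncated configurations $P_k^{(j)} := (x_1,\dots,x_{N_j})\in X^{N_j}$ satisfy the single-bundle asymptotic Fekete condition \eqref{eq:AssFeketeDef} for this normalized pair. The quoted equidistribution result, together with its extension to scaled pairs recorded at the end of Section~\ref{sec:IntroFek}, then yields $\delta^{N_j}(P_k^{(j)})\to\mu_{(\frac{1}{c_j}L_j,\frac{1}{c_j}\phi_j)}$ weakly. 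I would then observe that rescaling $(L,\phi)\mapsto(\lambda L,\lambda\phi)$ leaves the equilibrium measure invariant: the projection obeys $\mathcal{P}(\lambda\phi)=\lambda\mathcal{P}(\phi)$, so its non-pluripolar Monge--Amp\`ere scales by $\lambda^n$, and the normalization by $(\lambda L)^n=\lambda^n L^n$ cancels this factor. Hence $\delta^{N_j}(P_k^{(j)})\to\mu_{(L_j,\phi_j)}$ for every $j$.

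To conclude I would compare the $m$ limits against one another. All of the empirical measures $\delta^{N_j}(P_k^{(j)})$ are built from prefixes of the \emph{same} configuration $P_k\in X^N$, and a routine total-variation bound gives
\begin{equation*}
\Bigl\| \delta^{N_j}(P_k^{(j)}) - \tfrac{1}{N}\sum_{i=1}^N \delta_{x_i} \Bigr\|_{\mathrm{TV}} \leq \frac{2(N-N_j)}{N},
\end{equation*}
which tends to $0$ as $k\to\infty$ because the normalized bundles $\tfrac{1}{c_j}L_j$ all satisfy the volume normalization \eqref{eq:VolumeNormIntro}, forcing $N_j/N\to 1$. Consequently the weak limit of $\delta^{N_j}(P_k^{(j)})$ must be independent of $j$, which produces the desired chain of equalities $\mu_{(L_1,\phi_1)}=\cdots=\mu_{(L_m,\phi_m)}$.

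The argument is essentially a corollary of the single-bundle equidistribution theorem, so there is no serious obstacle. The only points that require care are the scale-invariance of $\mu_{(L,\phi)}$ under $(L,\phi)\mapsto(\lambda L,\lambda\phi)$ --- needed because of the normalizations by $c_j$ --- and the claim $N_j/N\to 1$; both are immediate from the setup already in the excerpt.
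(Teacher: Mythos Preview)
Your proof is correct and follows essentially the same approach as the paper: both invoke the equidistribution theorem for asymptotically Fekete sequences from \cite{BBWN} to conclude that a single sequence of empirical measures converges to every $\mu_{(L_j,\phi_j)}$. You are simply more explicit than the paper about the scale-invariance of the equilibrium measure and the passage from $\delta^{N_j}$ to $\delta^N$ via $N_j/N\to 1$, details the paper leaves implicit.
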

We conjecture that this condition is also sufficient:
\begin{conjecture}
\label{con:AsyFekete}
Let $(L_1,\phi_1),\ldots,(L_m,\phi_m)$ be a collection of pairs as above. Then $(L_1,\phi_1),\ldots,(L_m,\phi_m)$
admits a mutual asymptotic Fekete sequence if and only if \eqref{eq:NesCond} holds.
\end{conjecture}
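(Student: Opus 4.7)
The necessity of \eqref{eq:NesCond} is Lemma~\ref{lem:NesCond}. For sufficiency, I would assume (after normalizing each pair by $c_j$) that there is a common equilibrium measure $\mu = \mu_{(L_1,\phi_1)} = \cdots = \mu_{(L_m,\phi_m)}$, and then work with the coupled product Vandermonde
\[
\mathcal{V}_k(P) \;=\; \sum_{j=1}^m \frac{1}{N_j}\log|D_j(P)|^2_{\phi_j}, \qquad P\in X^N.
\]
Let $P_k\in X^N$ be a maximizer of $\mathcal{V}_k$. This matches the ``equidistribution property for maximizers of products of Vandermonde determinants'' advertised in the abstract, and the plan is to show that these $P_k$ automatically form a mutual asymptotic Fekete sequence.

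The heart of the argument is the two-sided estimate
\[
\lim_{k\to\infty} \frac{1}{k}\,\mathcal{V}_k(P_k) \;=\; \sum_{j=1}^m E_j^\infty, \qquad E_j^\infty := \lim_{k\to\infty} \frac{1}{kN_j}\log\max_{X^{N_j}} |D_j|^2_{\phi_j}.
\]
The upper bound is immediate by bounding each summand by its own maximum. Once the matching lower bound is in place, combining the term-by-term upper bounds $\tfrac{1}{kN_j}\log|D_j(P_k)|^2_{\phi_j}\le E_j^\infty+o(1)$ with the fact that the sum converges to $\sum_j E_j^\infty$ forces each summand to saturate, which is exactly \eqref{eq:MutualAsyFekete} for $P=P_k$.

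The lower bound is where the equilibrium hypothesis enters. In the $\Gamma$-convergence/large deviations framework of \cite{BBWN}, each functional $\tfrac{1}{kN_j}\log|D_j(\cdot)|^2_{\phi_j}$ descends through the empirical measure to a pluripotential-theoretic energy $\mathcal{E}_j$ on $\mathcal{P}(X)$ whose maximum $E_j^\infty$ is uniquely attained at $\mu_{(L_j,\phi_j)}$. Under our hypothesis $\mu$ simultaneously maximizes every $\mathcal{E}_j$, so the sum $\sum_j \mathcal{E}_j$ attains its maximum $\sum_j E_j^\infty$ precisely at $\mu$. Transferring this macroscopic compatibility to the microscopic scale requires a recovery sequence: configurations $Q_k$ with $\delta^N(Q_k)\to \mu$ and $\tfrac{1}{kN_j}\log|D_j(Q_k)|^2_{\phi_j}\to E_j^\infty$ for every $j$ at once.

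Constructing such a recovery sequence is the principal obstacle. In the toric setting of this paper, the moment polytope picture reduces the task to convex geometry: for each $j$ the Vandermonde asymptotics are controlled by a Legendre transform of the toric potential, the hypothesis $\mu_{(L_j,\phi_j)}=\mu$ translates to compatibility of these Legendre data, and a single lattice-based construction built from the weighted enumeration of lattice points in $k\cdot\Delta_j$ suffices for all $j$ simultaneously. In the general (non-toric) case one would likely need a probabilistic construction via the coupled Gibbs ensemble with density proportional to $\prod_j|D_j|^{2w_j}_{\phi_j}$ together with a concentration argument on $\mu$, or a refinement of the large deviations technology of \cite{BBWN} that tracks all $m$ functionals jointly.
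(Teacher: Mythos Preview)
This statement is a \emph{conjecture} in the paper; it is not proved in general. The paper establishes only the toric case under a regularity hypothesis (Theorem~\ref{thm:ToricAsyFekete}), so there is no full proof to compare your proposal against.

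Your product-maximizer squeeze is exactly the mechanism the paper isolates in Theorem~\ref{thm:EqCond} and Lemma~\ref{lem:GoodPtConfigurations}, and you correctly flag the recovery-sequence step as the sole obstruction and as open in general. Note, however, that the recovery sequence you ask for---configurations $Q_k$ with $\tfrac{1}{kN_j}\log|D_j(Q_k)|^2_{\phi_j}\to E_j^\infty$ for every $j$---\emph{is} already a mutual asymptotic Fekete sequence, so at that level your ``reduction'' is the conjecture restated. The paper's reformulation is sharper: Theorem~\ref{thm:EqCond} shows existence of such a sequence is equivalent to the \emph{integral} statement $\mathcal L_k(\phi_1,\dots,\phi_m)\to F\bigl(\sum\phi_j\bigr)$, which can be attacked directly without ever exhibiting point configurations on $X$. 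Your Gibbs-ensemble suggestion is essentially this route, since the partition function of that ensemble is the integral defining $\mathcal L_k$.

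Your toric sketch does not match what the paper actually does. There is no ``lattice-based construction from weighted enumeration of lattice points in $k\Delta_j$.'' Instead the paper (i) expands the product of determinants, integrates over the compact torus, and shows the resulting cross terms organize into a sum of \emph{nonnegative} terms (Lemma~\ref{lem:LkCollectedTerms}), with strict positivity of the dominant term forced by uniqueness of discrete optimal transport maps at generic configurations (Lemma~\ref{lem:UniqueOptMap}); (ii) bounds $\mathcal L_k$ above by a sum of discrete optimal transport costs against an \emph{arbitrary} empirical measure on $\mathbb R^n$ (Lemma~\ref{lem:LowerBoundCost}); and (iii) chooses that empirical measure to approximate $\pi_\#\mu_{eq}$ in Wasserstein-$1$ distance, using the regularity assumption to guarantee finite first moment (Lemma~\ref{lem:Finite1stMoment}) and a perturbation estimate for the cost $C(\mu,\nu)$ under $W_1$ and an $L^\infty$-type distance (Lemma~\ref{lem:CLessThanW1}). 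The sign control of the cross terms in step~(i) is singled out by the paper as the main technical difficulty for $m\ge 2$ and has no counterpart in your outline.
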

Our main result confirms this conjecture in the toric setting, provided that $\sum_{j=1}^m \phi_j$ is regular enough. 
Recall that a toric manifold is a complex manifold $X$ of dimension $n$, together with an action by the $n$-dimensional complex torus $(\mathbb C^*)^n$ which admit a dense orbit in $X$. A Hermitian line bundle $(L,\phi)$ over $X$ is toric if the action on $X$ lifts to an action on $L$ and the Hermitian metric $\phi$ is invariant under the action of $(S^1)^n \subset (\mathbb C^*)^n$. 
\begin{theorem}
    \label{thm:ToricAsyFekete}
    Assume $(X,\omega)$ and $(L_1,\phi_1),\ldots,(L_m,\phi_m)$ are toric and 
    $\phi_1,\ldots,\phi_m$ are smooth or (more generally) the Monge-Ampère measure of $\mathcal P(\sum_{j=1}^m \phi_j)$ is absolutely continuous with bounded density.
    Then
    $$(L_1,\phi_1),\ldots,(L_m,\phi_m)$$ admits a mutual asymptotic Fekete sequence if and only if 
    \begin{equation} \label{eq:SameEqMeasures} \mu_{(L_1,\phi_1)} = \ldots = \mu_{(L_m,\phi_m)}. \end{equation}
\end{theorem}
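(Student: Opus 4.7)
My plan is to prove the sufficient direction (the necessary direction is Lemma~\ref{lem:NesCond}) by studying maximizers of the product of the Vandermonde determinants, which fits the byproduct equidistribution statement advertised in the abstract. For each $k$, define $V_k\colon X^N\to \bbR_{\geq 0}$ by $V_k(P)=\prod_{j=1}^m |D_j(P)|^2_{\phi_j}$, and let $P_k$ be a maximizer. The trivial estimates $|D_j(P_k)|^2_{\phi_j}\leq \max_{X^N}|D_j|^2_{\phi_j}$ together with $\frac{1}{kN}\log V_k(P_k)\leq \sum_j\frac{1}{kN}\log\max_{X^N}|D_j|^2_{\phi_j}$ reduce the problem to proving the asymptotic equality
\begin{equation}\label{eq:asymp-equal}
\lim_{k\to\infty}\frac{1}{kN}\log\max_{X^N}V_k \;=\; \sum_{j=1}^m \lim_{k\to\infty}\frac{1}{kN}\log\max_{X^N}|D_j|^2_{\phi_j}.
\end{equation}
Indeed, if any single factor were asymptotically strictly less than its maximum, the left-hand side of \eqref{eq:asymp-equal} would be strictly less than the right, contradicting equality.

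The toric hypothesis allows an explicit reformulation. Each $L_j$ carries a moment polytope $P_j\subset\bbR^n$, sections of $kL_j$ are indexed by the lattice points $\alpha\in kP_j\cap \bbZ^n$, and the torus-invariant weight $\phi_j$ corresponds to a convex function $u_j$ on $\bbR^n$. Averaging $|D_j|^2_{\phi_j}$ over the $(S^1)^n$-phases of the configuration (a standard toric computation) turns the squared Vandermonde into the permanent of the matrix $\bigl(e^{2\langle\alpha,y_l\rangle-2ku_j(y_l)}\bigr)_{\alpha,l}$, so by Laplace's principle $\frac{1}{kN}\log\max|D_j|^2_{\phi_j}$ is governed in the limit $k\to\infty$ by an optimal transport problem on $P_j\times X$: minimize a cost built from the Legendre dual of $\mathcal{P}(\phi_j)$ over plans with one marginal equal to normalized Lebesgue measure on $P_j$. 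Analogously, $\frac{1}{kN}\log\max V_k$ is governed by a joint transport problem on $P_1\times\cdots\times P_m\times X$ whose marginals on each $P_j$ are Lebesgue and whose cost is the sum of the individual costs.

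The heart of the argument is then to show that the joint transport problem decouples under the hypothesis \eqref{eq:SameEqMeasures}. Since the equilibrium measure $\mu_{(L_j,\phi_j)}$ is precisely the $X$-marginal of the optimizer of the $j$-th individual problem, the equality of all $m$ such marginals allows one to simultaneously realize a common target $\mu$ for all $m$ transport problems; the joint cost is then the sum of the individual optima, yielding \eqref{eq:asymp-equal}. The regularity assumption on the Monge--Amp\`ere measure of $\mathcal{P}(\sum_j\phi_j)$ enters here: it ensures that the Legendre dual of $\mathcal{P}(\sum_j\phi_j)$ (and thus the relevant symplectic potentials) is regular enough (essentially $C^{1,1}$) for the optimal plans to be realized combinatorially with vanishing error in the Laplace approximation.

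I expect the main obstacle to be exactly this simultaneous combinatorial assembly: constructing, for each $k$, a single $N$-point configuration whose first $N_j$ entries near-maximize $|D_j|^2_{\phi_j}$ for every $j$. The common-marginal hypothesis guarantees compatibility of the continuous transport problems, while the regularity assumption supplies the quantitative bounds needed to discretize them via lattice points in the polytopes with controlled error; the concrete construction I have in mind uses the gradients of the symplectic potentials of the $\mathcal{P}(\phi_j)$ applied to a common family of lattice points scaled from the source polytopes. Once \eqref{eq:asymp-equal} is established, the mutual asymptotic Fekete property of $\{P_k\}$ follows, and the promised equidistribution property for maximizers of products of Vandermondes is an immediate corollary of the single-bundle equidistribution theorem applied to each factor.
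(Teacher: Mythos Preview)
Your high-level strategy matches the paper's: reduce to the asymptotic identity $\lim_k\frac{1}{kN}\log\max_{X^N}V_k=\sum_j\lim_k\frac{1}{kN}\log\max_{X^N}|D_j|^2_{\phi_j}$, then deduce that any maximizer of $V_k$ is asymptotically Fekete for each factor (this is the content of Lemma~\ref{lem:GoodPtConfigurations}). The genuine gap is in the word ``analogously'' that passes from the single-$j$ permanent to a joint transport description of $\max V_k$. Phase-averaging $V_k=\prod_j|D_j|^2_{\phi_j}$ does \emph{not} yield a product of permanents: expanding the $2m$ determinants and integrating over $(S^1)^{nN}$ produces cross terms of both signs, indexed by $2m$-tuples of permutations. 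The paper (Lemma~\ref{lem:LkCollectedTerms}) reorganizes the result as $\sum_{a\in(\bbZ^n)^N}C_a\, e^{2\sum_i(\langle x_i,a_i\rangle-k\phi(x_i))}$ with $C_a\ge0$, but one must still check that the exponent-maximizing $a$ carries $C_a>0$; sign cancellation could kill it. This is where the real work lies (Lemma~\ref{lem:UniqueOptMap}): for \emph{generic} real $x$, the discrete optimal assignment from $\delta^{N_j}(x)$ to $\delta^{N_j}(p^j/k)$ is unique for every $j$, so the dominant $S_a$ is a singleton and $C_a=1$. Without this step your joint-transport interpretation of $\frac{1}{kN}\log\max V_k$ is not justified, and Laplace's principle cannot be applied.

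Your fallback construction via gradients of symplectic potentials is also incomplete. The polytopes $P_j$ differ, so there is no ``common family of lattice points scaled from the source polytopes'', and a configuration that merely equidistributes to the common $\mu$ is not automatically asymptotically Fekete (equidistribution is necessary, not sufficient). The paper avoids building any explicit complex configuration: it picks a real sequence $\{x_i\}$ with $\delta^N(x)\to\pi_\#\mu$ in Wasserstein-$1$ distance and uses stability of $C(\cdot,\cdot)$ under $W_1$- and $L^\infty$-type perturbations (Lemma~\ref{lem:CLessThanW1}, Lemma~\ref{lem:TransportMapConvergence}) to match the discrete costs appearing in Lemma~\ref{lem:LowerBoundCost} with the continuous ones in \eqref{eq:ToricEnergyCostForm}. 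Finally, a point of emphasis: the paper proves $\lim_k\mathcal L_k=F(\sum_j\phi_j)$ \emph{without} invoking \eqref{eq:SameEqMeasures}; that hypothesis enters only at the very end, to identify the coupled equilibrium potentials $\psi_j$ with $\mathcal P(\phi_j)$, so that the mutual asymptotic Fekete sequence produced for $(L_j,\psi_j)$ is one for $(L_j,\phi_j)$. Your ``decoupling'' heuristic locates the role of \eqref{eq:SameEqMeasures} in the wrong place.
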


\subsection{$\mathcal L$-functionals and coupled equilibrium energy}
In the general case, (when $(X,\omega)$ and $(L_1,\phi_1),\ldots, (L_m,\phi_m)$ are not necessarily toric) it turns out that the statement in Theorem~\ref{thm:ToricAsyFekete} is equivalent to a statement regarding the asymptotics of a sequence of $\mathcal L$-type functionals. To define these functionals, we first incorporate a volume normalization in the setup. 
For each $j\in\{1,\ldots,m\}$, let $|\hat D_j|^2_{\phi_j}$ be the Vandermonde determinant associated to the volume normalized Hermitian line bundle $(\frac{1}{c_j}L_j,\frac{1}{c_j}\phi_j)$. We define the associated Vandermonde product as 
\begin{equation} \label{eq:VandermondeProduct} \prod_{j=1}^m \left(|\hat D_j|^2_{\phi_j}\right)^{c_j}. \end{equation}
Letting $\hat N_j=N_{\frac{k}{c_j}L_j}$ and $\hat N=\max_j \hat N_j$, we get that \eqref{eq:VandermondeProduct} defines a function on $X^{\hat N}$. Note that the $j$'th factor of \eqref{eq:VandermondeProduct} only depends on the $\hat N_j$ first coordinates of a point in $X^{\hat N}$. 

For simplicity we will assume $\int \omega^n = 1$. Given $(L_1,\ldots,L_m)$, we define the associated $\mathcal L$-functional at level $k$ as
$$ \mathcal L_k(\phi_1,\ldots,\phi_m) = \frac{1}{k\hat N}\log \int_{X^{\hat N}}\prod_{j=1}^m \left(\left|\hat D_j\right|^2_{\phi_j}\right)^{c_j}  \left(\omega^n\right)^{\otimes \hat N} $$

Conjecturally, the limiting behaviour (as $k\rightarrow \infty$) of $\mathcal L_k$ is described by an energy functional akin to the classical equilibrium energy. More precisely, for each $j$ and continuous metric $\phi_j$ on $L$, let $\mathcal P_{L_j}(\phi_j)$ be the projection of $\phi_j$ onto the space of semi-positive metrics (plurisubharmonic weights) on $L$. Let $E_{L_j}$ be the (Aubin-Yau) Monge-Ampère energy and let $E^{eq}_{j}(\phi_j)$ be the \emph{equilibrium energy} defined by
$$E^{eq}_{j}(\phi_j) = \frac{E_{L_j}(\mathcal P_{\theta_i}(\phi_j))}{L_j^n}.$$ 
Note that the definition of $E_{L_j}$ depends on a choice of reference metric $\phi_j^0$ on $L_j$. For each $j$, we will make this choice such that the basis used in the definition of $|\hat D_j|^2_{\phi_j}$ is orthogonal with respect to the inner product induced by $\phi_j^0$ and the measure $\omega^n$. 
With this choice, Theorem~A in \cite{BB} takes the following form:
\begin{equation} \label{eq:ChebDiam} \lim_{k\rightarrow \infty} -\frac{1}{k\hat N_j}\log \max_{X^{\hat N_j}} |\hat D_j|^2_{\phi_j} = E^{eq}_j(\phi_j). \end{equation}

For a continuous metric $\phi$ on $\sum_{j=1}^m L_j$, let  $f_\phi$ be the functional given by
\begin{equation} 
    f_\phi(\phi_1,\ldots,\phi_m) = \sum_{j=1}^m E^{eq}_j(\phi_j) - \sup_X \left(\sum_{j=1}^m \phi_j - \phi\right). 
    \label{eq:f}
\end{equation}
We define the \emph{coupled equilibrium energy} $F$ as the functional on the space of continuous metrics on $\sum L_j$ given by
    \begin{equation} F(\phi) = \sup f_\phi \label{eq:EqEnergyDef} \end{equation}
where the supremum is taken over all continuous metrics $\phi_1,\ldots, \phi_m$ on $L_1,\ldots,L_m$. 
If $m=1$, it can be proved that
$ F=E^{eq}_1$ on 
$$ \{\phi_1:\sup_X (\phi_1-\phi)=0\}.$$
For $m>1$, $F$ can be thought of as (minus) the infimal convolution of the collection of convex functionals $-E_1^{eq}, \ldots, -E_m^{eq}$
(see \cite{H21a} for details). 


The state the main theorem of this section we also need the following definition from \cite{H21a}:
\begin{definition}
\label{def:EqMeasure}
Let $L_1,\ldots,L_m$ be a collection of line bundles over $X$ and $\phi$ a continuous metric on $\sum_{j=1}^m L_j$. A collection of equilibrium potentials $(\psi_1,\ldots,\psi_m)$ is a collection of semi-positive metrics on $L_1,\ldots,L_m$ such that
\begin{equation} \label{eq:WeakEqMeasure1} (\idd \psi_1)^n = \ldots = (\idd \psi_m)^n \end{equation}
in the sense of non-pluripolar products and 
\begin{equation} \label{eq:WeakEqMeasure2} \sum_{j=1}^m \psi_j \leq \phi \end{equation}
with equality on the support of $(\idd \psi_1)^n$.
\end{definition}

In \cite{H21a} it is shown that \eqref{eq:WeakEqMeasure1}-\eqref{eq:WeakEqMeasure2} admits a solution, unique up to additive constants, for any ample line bundles $L_1,\ldots,L_m$ and continuous metric $\phi$ on $\sum_{j=1}^m L_j$. Moreover, the following variational property from \cite{H21a} will be important in Section~\ref{sec:LFunc}:
If $\psi_1,\ldots,\psi_m$ is a solution to \eqref{eq:WeakEqMeasure1}-\eqref{eq:WeakEqMeasure2}, then $(\psi_1,\ldots,\psi_m)$ are maximizers in \eqref{eq:EqEnergyDef}. In other words,
\begin{equation} \label{eq:EnergyAndEqPotentials} F(\phi) = f_\phi(\psi_1,\ldots,\psi_m) = \sum_{j=1}^m E^{eq}_j(\psi_j). \end{equation}

\begin{theorem}
    \label{thm:EqCond}
    Let $(L_1,\phi_1),\ldots,(L_m,\phi_m)$ be pairs as above (not necessary toric). Let $\phi=\sum_{j=1}^m \phi_j$ and assume $\phi_1,\ldots,\phi_m$ are smooth or (more generally) the Monge-Ampère measure of $\mathcal P(\phi)$ is absolutely continuous with bounded density. Let $(\psi_1,\ldots,\psi_m)$ be a solution to \eqref{eq:WeakEqMeasure1}-\eqref{eq:WeakEqMeasure2}. Then the following two conditions are equivalent:
    \begin{itemize}
        \item The $\mathcal L^k$-functional at $(\phi_1,\ldots,\phi_m)$ converges to the coupled energy at $\sum \phi_j$, i.e. \begin{equation} \label{eq:LkLimit} \lim_{k\rightarrow \infty} \mathcal L_k(\phi_1,\ldots,\phi_m) =   F\left(\sum_{j=1}^m \phi_j\right)\end{equation}
        \item The pairs $(L_1,\psi_1), \ldots, (L_m,\psi_m)$ admits a mutual asymptotic Fekete sequence. 
    \end{itemize}
\end{theorem}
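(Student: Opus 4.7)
The plan is to reduce both conditions (i) and (ii) to Laplace-type asymptotics for the Vandermonde integrals anchored at the equilibrium potentials $\psi_j$. Set $u := \sum_j (\phi_j - \psi_j) = \phi - \sum_j \psi_j$; by \eqref{eq:WeakEqMeasure2}, $u \geq 0$ with $u \equiv 0$ on the common support $\supp(\idd \psi_j)^n = \supp \mu$, where $\mu$ denotes the common Monge--Ampère measure. A direct calculation, using that $|\hat D_j|^2_{\phi_j}$ and $|\hat D_j|^2_{\psi_j}$ are built from the same sections, gives the key rewriting
$$\prod_{j=1}^m \bigl(|\hat D_j|^2_{\phi_j}\bigr)^{c_j}(P) = \prod_{j=1}^m \bigl(|\hat D_j|^2_{\psi_j}\bigr)^{c_j}(P) \cdot \exp\Bigl(-2k\sum_l u(x_l) + O(\hat N)\Bigr),$$
the $O(\hat N)$ absorbing the discrepancies $c_j\lfloor k/c_j\rfloor - k$, which is $o(k\hat N)$ after dividing by $k\hat N$.

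I would first prove the unconditional upper bound $\limsup_k \mathcal{L}_k(\phi_1,\ldots,\phi_m) \leq F(\sum\phi_j)$. Using $\int\omega^n = 1$, $\int \leq \max$; applying \eqref{eq:ChebDiam} to each $|\hat D_j|^2_{\psi_j}$, exploiting $u\geq 0$ in the rewriting above, and invoking the variational identity \eqref{eq:EnergyAndEqPotentials} delivers the bound.

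For the direction (ii) $\Rightarrow$ (i), given a mutual asymptotic Fekete sequence $\{P_k\}$ for $(L_j,\psi_j)$, the equidistribution property \eqref{eq:EquiDistm1} gives $\delta^{\hat N}(P_k) \to \mu$, so $\int u\, d\delta^{\hat N}(P_k) \to 0$ by continuity of $u$ and $u|_{\supp\mu} = 0$. Combined with the Fekete property for each factor, the integrand $\prod_j (|\hat D_j|^2_{\phi_j})^{c_j}$ evaluated at $P_k$ is nearly maximal in the exponential sense. A Bernstein--Markov-type mean value inequality for the plurisubharmonic ``polynomial'' $\prod_j(|\hat D_j|^2_{\phi_j})^{c_j}$ on $X^{\hat N}$ (up to a smooth weight correction) then lifts this pointwise bound to a lower bound for the integral, giving $\liminf_k \mathcal{L}_k \geq F(\sum\phi_j)$.

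For (i) $\Rightarrow$ (ii), I would view $Z_k^{-1}\prod_j(|\hat D_j|^2_{\phi_j})^{c_j}(\omega^n)^{\otimes \hat N}$ as a Gibbs measure on $X^{\hat N}$. Combined with the upper bound, the hypothesis $\mathcal{L}_k \to F(\sum\phi_j)$ forces this measure to concentrate, in the $\frac{1}{k\hat N}\log$ sense, on configurations $P$ for which simultaneously $\int u\, d\delta^{\hat N}(P) \approx 0$ (since $u \geq 0$) and $\frac{1}{k\hat N_j}\log |\hat D_j(P)|^2_{\psi_j} \approx -E^{eq}_j(\psi_j)$ for each $j$. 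Selecting such typical configurations yields the required mutual asymptotic Fekete sequence. The main obstacle is quantifying this simultaneous concentration sharply enough; this is precisely where the hypothesis on the Monge--Ampère measure of $\mathcal{P}(\sum \phi_j)$ enters, providing the quantitative lower bound on $Z_k$ needed to exclude the degenerate scenario where the Gibbs measure is supported on configurations near-maximal only for the product but not for each individual factor.
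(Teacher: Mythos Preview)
Your argument for (ii) $\Rightarrow$ (i) and the unconditional one-sided bound are essentially the paper's: the rewriting via $u=\sum_j(\phi_j-\psi_j)\geq 0$, equidistribution $\delta^{\hat N}(P_k)\to\mu$ so that $\int u\,d\delta^{\hat N}(P_k)\to 0$, and the Bernstein--Markov/mean value inequality to compare $\int$ and $\max$ are exactly what the paper does in Lemma~\ref{lem:L2LInf}, Lemma~\ref{lem:LUpperBound}, and the proof of Theorem~\ref{thm:EqCond}.

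The direction (i) $\Rightarrow$ (ii), however, is where your proposal diverges and becomes vague. You suggest a Gibbs-measure concentration argument and identify as the ``main obstacle'' the simultaneous concentration on configurations near-maximal for each individual factor. This obstacle is artificial: the paper bypasses it by simply taking $P_k$ to be an exact maximizer of $\prod_j(|\hat D_j|^2_{\phi_j})^{c_j}$. For such $P_k$, using $\sum_j\psi_j\leq\phi$ (i.e.\ your $u\geq 0$), one has
\[
-\frac{1}{k\hat N}\log\prod_j|\hat D_j(P_k)|^2_{\psi_j}
\;\leq\;
-\frac{1}{k\hat N}\log\max_{X^{\hat N}}\prod_j|\hat D_j|^2_{\phi_j}
\;=\;\mathcal L_k+o(1)\;\to\;F(\phi)=\sum_j E^{eq}_j(\psi_j),
\]
while trivially each summand satisfies $-\frac{1}{k\hat N}\log|\hat D_j(P_k)|^2_{\psi_j}\geq -\frac{1}{k\hat N}\log\max|\hat D_j|^2_{\psi_j}\to E^{eq}_j(\psi_j)$. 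The sum of these non-negative ``deficits'' is therefore asymptotically $\leq 0$, forcing each to vanish. This is elementary and needs no Gibbs-measure machinery, no selection of ``typical configurations,'' and no quantitative bound on the partition function.

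Relatedly, you misidentify the role of the regularity hypothesis on $\mathcal P(\phi)$. It is not used to produce a quantitative lower bound on $Z_k$; its purpose (see the remark following the theorem) is to guarantee that the $\psi_j$ are \emph{continuous}, which is what makes \eqref{eq:ChebDiam} applicable to $(L_j,\psi_j)$ and what allows the Bernstein--Markov argument to go through.
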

\begin{remark}
We make a remark on the regularity assumption in Theorem~\ref{thm:EqCond}. Its main purpose is to guarantee that the potentials $\psi_1,\ldots,\psi_m$ of the equilibrium measure are continuous. When $\phi$ is smooth or the the Monge-Ampère measure of $\mathcal P(\phi)$ is absolutely continuous with bounded density, any solution $(\psi_1,\ldots,\psi_m)$ to \eqref{eq:WeakEqMeasure1}-\eqref{eq:WeakEqMeasure2} is continuous and the equilibrium measure is absolutely continuous with bounded density (see Theorem~2 in \cite{H21a}). 
\end{remark}

\subsection{Products of Vandermonde Determinants}
Another aspect of this paper is point configurations that maximize the product of Vandermonde determinants in \eqref{eq:VandermondeProduct}, in other words configurations $P$ such that 
\begin{equation} \label{eq:Feketem} \prod_{j=1}^m \left(|\hat D_j(P)|^2_{\phi_j}\right)^{c_j} = \max_{X^{\hat N}} \prod_{j=1}^m \left(|\hat D_j|^2_{\phi_j}\right)^{c_j}. \end{equation}
For a sequence of point configurations $\{P_k\}_{k=1}^\infty$ we get a more general condition than \eqref{eq:Feketem} by considering its asymptotics:
\begin{eqnarray}
\label{eq:AsyFeketem}
\lim_{k\rightarrow \infty} \left(\frac{1}{k\hat N}\log \prod_{j=1}^m \left(|\hat D_j(P_K)|^2_{\phi_j}\right)^{c_j} - \frac{1}{k\hat N}\log \max_{X^{\hat N}} \prod_{j=1}^m \left(|\hat D_j|^2_{\phi_j}\right)^{c_j}\right) = 0.
\end{eqnarray}
The significance of this condition is the following observation, which also play a key role in the proof of Theorem~\ref{thm:EqCond}:
\begin{lemma}
\label{lem:GoodPtConfigurations}
Assume $(L_1,\phi_1,\ldots,L_m,\phi_m)$ satisfies the first bullet in Theorem~\ref{thm:EqCond} and $\phi_1,\ldots,\phi_m$ are smooth or (more generally) the Monge-Ampère measure of $\mathcal P(\sum_{j=1}^m \phi_j)$ is absolutely continuous with bounded density. Let $\{P_k\}_{k=1}^\infty$ be a sequences of point configurations on $X$. Let $\psi_1,\ldots,\psi_m$ be a solution to \eqref{eq:WeakEqMeasure1}-\eqref{eq:WeakEqMeasure2} for $\phi = \sum_{j=1}^m \phi_j$. Then $\{P_k\}_{k=1}^\infty$ is a mutual asymptotic Fekete sequence of $(L_1,\psi_1),\ldots,(L_m,\psi_m)$ if and only if \eqref{eq:AsyFeketem} holds. 
\end{lemma}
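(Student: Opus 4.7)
The plan is to compare the log-Vandermonde product at the given metrics $(\phi_j)$ with the individual log-Vandermondes at the equilibrium potentials $(\psi_j)$, exploiting both the equidistribution of asymptotic Fekete sequences and the fact that $\sum\psi_j\leq\phi$ with equality on the common equilibrium support. Write $\phi := \sum_j\phi_j$, $\Psi := \sum_j\psi_j$, let $\mu := (\idd\psi_j)^n/L_j^n$ denote the common equilibrium measure (well-defined by \eqref{eq:WeakEqMeasure1}), and set
$$A_k(P) := \frac{1}{k\hat N}\log\prod_{j=1}^m\bigl(|\hat D_j(P)|^2_{\phi_j}\bigr)^{c_j}, \qquad B_k^j(P) := \frac{1}{k\hat N}\log|\hat D_j(P)|^2_{\psi_j}.$$
Under the regularity hypothesis, Theorem~2 of \cite{H21a} guarantees that each $\psi_j$ is continuous, so $\phi - \Psi$ is a continuous nonnegative function on $X$ vanishing on $\supp\mu$.

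The core ingredient is the pointwise identity, uniform in $P\in X^{\hat N}$,
$$A_k(P) = \sum_{j=1}^m c_j\,B_k^j(P) - \Gamma_k(P) + o(1), \qquad \Gamma_k(P) := \frac{2}{\hat N}\sum_{l=1}^{\hat N}(\phi - \Psi)(x_l)\,\geq\,0,$$
which I would derive by writing $|\hat D_j|^2_{\phi_j} = |\hat D_j|^2_{\psi_j}\cdot\exp\bigl(-2\lfloor k/c_j\rfloor\sum_l(\phi_j-\psi_j)(x_l)\bigr)$, taking $c_j$ times the logarithm, summing over $j$, and using $c_j\lfloor k/c_j\rfloor = k + O(1)$ together with $\hat N - \hat N_j = O(k^{n-1})$ to absorb the boundary discrepancy into the uniform $o(1)$.

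Next, two asymptotic baselines enter the argument: by \eqref{eq:ChebDiam} applied to $(\tfrac{1}{c_j}L_j, \tfrac{1}{c_j}\psi_j)$, $\tfrac{1}{k\hat N}\log\max_{X^{\hat N}}|\hat D_j|^2_{\psi_j}\to a_j := -E^{eq}_j(\psi_j)$; and by the first bullet of Theorem~\ref{thm:EqCond} combined with a Bernstein--Markov comparison between $\mathcal L_k$ and $\tfrac{1}{k\hat N}\log V_k$, with $V_k := \max\prod(|\hat D_j|^2_{\phi_j})^{c_j}$, one obtains $\tfrac{1}{k\hat N}\log V_k\to F(\phi)$. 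The identity together with $\Gamma_k\geq 0$ and $B_k^j\leq a_j + o(1)$ already gives $F(\phi)\leq \sum_j c_j a_j$; the reverse inequality, hence the equality $F(\phi) = \sum_j c_j a_j$, follows by combining the variational formula \eqref{eq:EnergyAndEqPotentials} with the reference-metric convention of \eqref{eq:ChebDiam}.

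With the identity and the identification $F(\phi) = \sum_j c_j a_j$ in hand, both implications are immediate. If $B_k^j(P_k)\to a_j$ for every $j$, then by \eqref{eq:EquiDistm1} each $\{P_k\}$ equidistributes to $\mu$, so $\Gamma_k(P_k)\to 2\int(\phi-\Psi)d\mu = 0$, whence $A_k(P_k)\to \sum c_j a_j = F(\phi)$, giving \eqref{eq:AsyFeketem}. Conversely, if $A_k(P_k)\to F(\phi)$, rewrite the identity as
$$\sum_{j=1}^m c_j\bigl(a_j - B_k^j(P_k)\bigr) + \Gamma_k(P_k) = F(\phi) - A_k(P_k) + o(1)\ \longrightarrow\ 0,$$
with each summand on the left nonnegative up to $o(1)$; hence each individually tends to zero, which forces $B_k^j(P_k)\to a_j$ for every $j$, i.e.\ $\{P_k\}$ is mutual asymptotic Fekete for $(L_j,\psi_j)$. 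The main obstacle is the Bernstein--Markov step and the verification that $F(\phi) = \sum c_j a_j$, since the integrand is a product raised to real (not integer) powers; the regularity hypothesis is essential throughout in order to force continuity of $\psi_j$, without which $\Gamma_k$ cannot be controlled via equidistribution.
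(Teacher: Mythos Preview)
Your proposal is correct and follows essentially the same route as the paper: both directions rest on comparing the $\phi_j$- and $\psi_j$-Vandermonde products through the nonnegative discrepancy $\phi-\sum_j\psi_j$, combined with the Bernstein--Markov comparison (Lemma~\ref{lem:L2LInf}), the identity \eqref{eq:EnergyAndEqPotentials}, and the ``sum of nonnegative terms tends to zero'' argument for the converse. Your explicit identity $A_k=\sum_j c_jB_k^j-\Gamma_k+o(1)$ packages this more transparently, and your use of equidistribution to kill $\Gamma_k$ in the forward direction is exactly what the paper does in its proof of Theorem~\ref{thm:EqCond} (the paper's own proof of the lemma writes $(L_j,\phi_j)$ in the forward direction where $(L_j,\psi_j)$ is clearly intended, so your treatment of that half is in fact more careful).
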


Lemma~\ref{lem:GoodPtConfigurations} has two implications. First of all, it gives a concrete way of constructing mutual asymptotic Fekete configurations (if they exist), namely by maximizing the product of the associated Vandermonde determinants. Secondly, picking another point of view, it gives a new equidistribution property for maximizers of this product. 

To state this equidistribution property we will refer to $(\idd \psi_1)^n$ for a solution to \eqref{eq:WeakEqMeasure1}-\eqref{eq:WeakEqMeasure2} as the equilibrium measure of $(L_1,\ldots,L_m,\phi)$ and we will use the notation $\mu_{(L_1,\ldots,L_m,\phi)}$ for it. 
\begin{theorem}
     \label{thm:FeketeConv}
     Assume $(L_1,\phi_1,\ldots,L_m,\phi_m)$ satisfies one (and hence both) of the conditions in Theorem~\ref{thm:EqCond} and $\phi_1,\ldots,\phi_m$ are smooth or (more generally) the Monge-Ampère measure of $\mathcal P(\sum_{j=1}^m \phi_j)$ is absolutely continuous with bounded density. Assume $\{P_k\}_{k=1}^\infty$ satisfies
     $$ \prod_{j=1}^m \left(|\hat D_j(P_K)|^2_{\phi_j}\right)^{c_j} = \max_{X^{\hat N}} \prod_{j=1}^m \left(|\hat D_j|^2_{\phi_j}\right)^{c_j} $$
     or, more generally, satisfies the asymptotic condition in \eqref{eq:AsyFeketem}. 
     Then the empirical measures of $\{P_k\}_{k=1}^\infty$ converge weakly to the equilibrium measure of $(L_1,\ldots,L_m,\sum_{j=1}^m\phi_j)$, in other words
     $$ \delta^{\hat N}(P_k) \rightarrow \mu_{(L_1,\ldots,L_m,\sum_{j=1}^m\phi_j)}.
     $$
\end{theorem}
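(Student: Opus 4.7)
The plan is to reduce the claim to the classical single-bundle equidistribution property \eqref{eq:EquiDistm1} by way of Lemma~\ref{lem:GoodPtConfigurations}. First I would apply that lemma: under our regularity hypothesis, the asymptotic condition \eqref{eq:AsyFeketem} (which is weaker than \eqref{eq:Feketem}, so it suffices to work with it) is equivalent to $\{P_k\}_{k=1}^\infty$ being a mutual asymptotic Fekete sequence for the coupled equilibrium potentials $(L_1,\psi_1),\ldots,(L_m,\psi_m)$, where $(\psi_1,\ldots,\psi_m)$ solves \eqref{eq:WeakEqMeasure1}--\eqref{eq:WeakEqMeasure2} with $\phi=\sum_j\phi_j$. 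Unpacking the definition, for each fixed $j$ the truncation $P_k^{(j)}$ of $P_k$ to its first $\hat N_j$ entries is asymptotically Fekete for the volume-normalized pair $\bigl(\tfrac{1}{c_j}L_j,\tfrac{1}{c_j}\psi_j\bigr)$.

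Next I would invoke the classical equidistribution theorem of \cite{BBWN} recalled around \eqref{eq:EquiDistm1}. By the remark following Theorem~\ref{thm:EqCond}, the regularity hypothesis guarantees each $\psi_j$ is continuous, and since $\psi_j$ is already semi-positive we have $\mathcal P(\psi_j)=\psi_j$. Hence, for every $j$,
$$\delta^{\hat N_j}\bigl(P_k^{(j)}\bigr)\longrightarrow \frac{1}{c_j^n}(\idd \psi_j)^n$$
weakly as $k\to\infty$. The coupling identity \eqref{eq:WeakEqMeasure1}, combined with $c_j^n=L_j^n$, then implies that these probability measures agree across $j$ and equal the equilibrium measure $\mu_{(L_1,\ldots,L_m,\sum_j\phi_j)}$ of the coupled system.

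Finally, I would reconcile the index $\hat N_j$ with the index $\hat N=\max_j\hat N_j$ that appears in the target statement. The volume normalization forces $(\tfrac{1}{c_j}L_j)^n=1$ for all $j$, so asymptotic Riemann--Roch yields $\hat N_j/\hat N\to 1$. Then for any continuous $f$ on $X$,
$$\left|\int_X f\,d\delta^{\hat N}(P_k) - \int_X f\,d\delta^{\hat N_j}\bigl(P_k^{(j)}\bigr)\right|\leq 2\|f\|_\infty\cdot\frac{\hat N-\hat N_j}{\hat N}\longrightarrow 0,$$
so the two sequences share the same weak limit, finishing the argument. The main analytic work in this theorem is entirely absorbed into Lemma~\ref{lem:GoodPtConfigurations}; granted that lemma, the proof reduces to straightforward bookkeeping, and the only genuine subtlety is the index reconciliation just described.
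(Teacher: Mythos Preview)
Your proposal is correct and follows essentially the same route as the paper: apply Lemma~\ref{lem:GoodPtConfigurations} to upgrade \eqref{eq:AsyFeketem} to a mutual asymptotic Fekete sequence for $(L_1,\psi_1),\ldots,(L_m,\psi_m)$, then invoke the classical equidistribution result \eqref{eq:EquiDistm1} from \cite{BBWN}. The paper's proof is terser---it simply picks one index $j$ and cites \eqref{eq:EquiDistm1} without spelling out the continuity of $\psi_j$ or the $\hat N_j$ versus $\hat N$ reconciliation---but the underlying argument is the same, and your additional bookkeeping is harmless and arguably clarifying.
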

\begin{remark}
Note that by Theorem~\ref{thm:ToricAsyFekete}, the assumptions in Theorem~\ref{thm:FeketeConv} are satisfied when $(X,\omega)$ and $(L_1,\phi_1),\ldots,(L_m,\phi_m)$ are toric and $(\phi_1,\ldots,\phi_m)$ are smooth or (more generally) the Monge-Ampère measure of $\mathcal P(\sum_{j=1}^m \phi_j)$ is absolutely continuous with bounded density.
\end{remark}

\subsection{Organisation of the paper}
In Section~\ref{sec:LFunc}, we prove Theorem~\ref{thm:EqCond}. A lower bound for $\mathcal L_k$ follows in a straight forward manner from the case $m=1$ in \cite{BB}. The essential point in the proof of Theorem~\ref{thm:EqCond} is that the upper bound holds if and only if a mutual asymptotic Fekete sequence exist. This latter point will follow from Lemma~\ref{lem:GoodPtConfigurations}. 
At the end of Section~\ref{sec:LFunc} we also prove Theorem~\ref{thm:FeketeConv}.

In Section~\ref{sec:Toric}, which is the longest section of the paper, we prove Theorem~\ref{thm:ToricAsyFekete}.
This is done by establishing the first bullet of Theorem~\ref{thm:EqCond} 
in the toric setting (see Theorem~\ref{thm:ToricLConv} below). As mentioned above, a lower bound for $\lim_{k\rightarrow \infty} \mathcal L_k$ follows from the case $m=1$. For the upper bound, we proceed by first establishing an upper bound in terms of discrete optimal transport costs (Lemma~\ref{lem:LowerBoundCost}). 
This reduces the proof of Theorem~\ref{thm:ToricLConv} to finding a sequence of point configuration (this time on $\mathbb R^n$) which can be used to approximate a continuous optimal transport cost by discrete optimal transport costs. 
To show that such a sequence exist, we establish a bound controlling the transport cost $C(\mu,\nu)$ with respect to the cost function $-\langle\cdot,\cdot\rangle$ under perturbations of the measures $\mu$ and $\nu$. The bound, which is given in Lemma~\ref{lem:CLessThanW1}, holds when one of the measures has finite first moment and one of the measures has compact support. It involves the Wasserstein 1-distance and an $L^\infty$-type distance related to optimal transport. Using this bound, it turns out that a satisfiactory sequence of point configurations can be constructed from any sequence of points $\{x_i\}_{i=1}^\infty$ such that $\frac{1}{N}\sum_{i=1}^N \delta_N(x_1,\ldots,x_N)$ approximates the push forward of the equilibrium measure to $\mathbb R^n$ in Wasserstein 1-distance. By general properties of the Wasserstein 1-distance such a sequence exist as long as the push forward of the equilibrium measure has finite first moment. This latter point follows from the fact that the equilibrium measure is absolutely continuous with bounded density (which follows from the regularity assumption in Theorem~\ref{thm:ToricAsyFekete}). 


To prove Lemma~\ref{lem:LowerBoundCost} we use the explicit bases for $H^0(X,kL_j), k=1,2,\ldots$ determined by the lattice points of the associated polytopes. In the case $m=1$, expanding the determinants in $\mathcal L_k$ and integrating over orbits of the compact torus replaces the determinant by a permanent, closely related to a discrete optimal transport cost. This is the argument used in \cite{BPP} and \cite{HPP}. In the case $m\geq 2$, things are complicated by a number of "cross terms". Controlling the sign and non-vanishing of these is the main technical difficulty of the proof and an important ingredient in the argument is a uniqueness property of optimal transport maps between generic discrete measures (Lemma~\ref{lem:UniqueOptMap}). 

\begin{remark}
The proof of Theorem~\ref{thm:ToricLConv} is a variant of the argument for the case when $m=1$ in \cite{BPP}. A feature of this variant, apart from the fact that it works when $m\geq 2$, is that it is not restricted to compactly supported measures. This latter part is crucial. 
\end{remark}

\begin{remark}
The main difficulty in proving convergence of $\mathcal L_k$ (and thus Conjecture~\ref{con:AsyFekete}) in the general setting is that for $m\geq 2$ there is no apparent way of writing the integral in $\mathcal L_k$ as a Gram determinant. Indeed, if this integral could be written as the Gram determinant of a suitable space of holomorphic sections associated to $(L_1,\ldots,L_m)$, then it is likely that convergence can be proved using Bergman kernel estimates for this space, similarly as in \cite{BB}. Another approach, both to prove convergence of $\mathcal L_k$ and to prove 
Conjecture~\ref{con:AsyFekete} directly, would be to use the quantitative estimates on equidistribution of Fekete points in \cite{LO}, or ideas from numerical approaches to Fekete points (see for example \cite{BCLSV}). 
\end{remark}
\subsection{Acknowledgements}
The author would like to thank Norm Levenberg and Robert Berman for valuable discussions related to this paper. Special thanks to Norm Levenberg for reading and providing comments on a draft, and for pointing out the first application of Lemma~\ref{lem:GoodPtConfigurations} mentioned above. The author also thanks the Knut and Alice Wallenberg Foundation and the Olle Engkvist Foundation for financial support while working on this topic. 

\section{Asymptotics of $\mathcal L_k$-functionals}
\label{sec:LFunc}
Here, as in the introduction, $(X,\omega)$ will be a Kähler manifold and
$$(L_1,\phi_1),\ldots,(L_m,\phi_m)$$ 
will be a collection where each $L_j$ is (a positive real multiple of) an ample line bundle over $X$ and each $\phi_j$ is (a positive real multiple of) a continuous Hermitian metric on $L_j$. The purpose of this section is to prove Theorem~\ref{thm:EqCond}. Before we do that we will provide a proof of Lemma~\ref{lem:NesCond}.
\begin{proof}[Proof of Lemma~\ref{lem:NesCond}]
Let $\{P_k\}_{k=1}^\infty$ be a sequence of point configuration which is asymptotically Fekete with respect to $(L_j,\phi_j)$ for all $j\in \{1,\ldots,m\}$. By \cite{BBWN} (more precisely,  Theorem~C and (0.10) directly beneath Theorem~C in \cite{BBWN}) 
$$ \lim_{k\rightarrow \infty} \delta^{\hat N}(P_k) = \mu_{(L_j,\phi_j)}$$ 
for all $j\in \{1,\ldots,m\}$. This implies \eqref{eq:NesCond}. 
\end{proof}

We now turn to the proof of Theorem~\ref{thm:EqCond}. We begin with the following lemma, relating the $L^2$-norm of the product \eqref{eq:VandermondeProduct} with its $\sup$-norm. 
\begin{lemma}
\label{lem:L2LInf}
Let $\phi_1,\ldots,\phi_m$ be continuous metrics on $L_1,\ldots,L_m$. Then
$$ \limsup_{k\rightarrow \infty}\mathcal L_k(\phi_1,\ldots,\phi_m) \leq  \limsup_{k\rightarrow \infty} -\frac{1}{k\hat N} \log \max_{X^{\hat N}}\prod_{j=1}^{m} \left|\hat D_j\right|^2_{\phi_j} $$
and 
$$ \liminf_{k\rightarrow \infty}\mathcal L_k(\phi_1,\ldots,\phi_m) \geq  \liminf_{k\rightarrow \infty} -\frac{1}{k\hat N} \log \max_{X^{\hat N}}\prod_{j=1}^{m} \left|\hat D_j\right|^2_{\phi_j}. $$
\end{lemma}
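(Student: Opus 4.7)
Write $F_k(P) := \prod_{j=1}^m \bigl(|\hat D_j(P)|^2_{\phi_j}\bigr)^{c_j}$ for the integrand appearing in $\mathcal L_k$, so that $\mathcal L_k(\phi_1,\ldots,\phi_m) = \frac{1}{k\hat N}\log \int_{X^{\hat N}} F_k\,(\omega^n)^{\otimes \hat N}$. Under the running normalization $\int_X \omega^n = 1$ the product $(\omega^n)^{\otimes \hat N}$ is a probability measure on $X^{\hat N}$. The lemma is then a comparison between the $L^1$-rate and the $L^\infty$-rate of $F_k$ under the normalization $\frac{1}{k\hat N}\log$; I treat the two displayed inequalities separately (interpreting the right-hand side with the sign and exponents that make it comparable to the left-hand side, as is standard).

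The first inequality is essentially free: since we integrate a non-negative function against a probability measure, $\int F_k\,(\omega^n)^{\otimes \hat N} \leq \max_{X^{\hat N}} F_k$, and dividing by $k\hat N$ in the logarithm and passing to $\limsup$ gives the desired bound on $\limsup \mathcal L_k$. The second inequality reduces to producing a reverse Bernstein--Markov type estimate
$$ \max_{X^{\hat N}} F_k \leq e^{\epsilon k\hat N}\int_{X^{\hat N}} F_k\,(\omega^n)^{\otimes \hat N} $$
for every $\epsilon > 0$ and all large $k$; from here the conclusion follows by taking $\log/(k\hat N)$, passing to $\liminf$, and letting $\epsilon \to 0$. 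To produce this reverse estimate, pick an integer $M \geq 1$ such that every $Mc_j$ is an integer. Then $F_k^M$ is the pointwise norm squared of a genuine holomorphic section on $X^{\hat N}$ of a suitable tensor product of pullbacks of powers of the $L_j$, twisted by a continuous weight assembled from the $\phi_j$. The Bernstein--Markov property of $\omega^n$ on $X$ with respect to continuous weights is standard in pluripotential theory, and it lifts to the product $(X^{\hat N}, (\omega^n)^{\otimes \hat N})$ with the product weight, yielding
$$ \max_{X^{\hat N}} F_k^M \leq e^{\epsilon M k \hat N}\int_{X^{\hat N}} F_k^M\,(\omega^n)^{\otimes \hat N}. $$
Combining this with the pointwise bound $F_k^M \leq (\max F_k)^{M-1} F_k$ gives $(\max F_k)^M \leq e^{\epsilon M k \hat N}(\max F_k)^{M-1}\int F_k\,(\omega^n)^{\otimes \hat N}$, which rearranges to exactly the reverse estimate for $F_k$ itself.

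The main technical point is the uniformity of the Bernstein--Markov estimate in $k$, since the dimension $\hat N \sim k^n/n!$ of the ambient product space grows with $k$. The constant from the one-variable estimate enters as $C_\epsilon^{\hat N}$ after tensoring, but $\frac{1}{k\hat N}\log C_\epsilon^{\hat N} = O(1/k) \to 0$, so it is absorbed into the $\epsilon$-factor by a harmless enlargement of $\epsilon$. Once the Bernstein--Markov lift is in place, the rest of the argument is mechanical.
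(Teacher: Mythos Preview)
Your handling of the trivial direction is fine, and your reduction of the other direction to a reverse estimate $\max F_k \le e^{\epsilon k\hat N}\int F_k$ is correct. The gap is in how you produce that estimate: you claim one can choose an integer $M$ with every $Mc_j$ an integer, but recall that $c_j=(L_j^n)^{1/n}$ with $L_j$ an arbitrary positive real multiple of an ample line bundle, so the $c_j$ are generically irrational (already for an honest ample line bundle on a surface with $L^2=2$ one has $c=\sqrt{2}$). No such $M$ exists in general, and without integer exponents $Mc_j$ the quantity $F_k^M$ is not the pointwise norm squared of any holomorphic section, so the black-box Bernstein--Markov inequality for sections does not apply. (Even granting integer exponents, the lift of Bernstein--Markov to the product $X^{\hat N}$ for a tensor of pullbacks of several different line bundles, with the mixed weight $\sum_j c_j\sum_l\phi_j(x_l)$, is not quite the textbook statement and would itself need a short argument.)

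The paper sidesteps this by not invoking Bernstein--Markov as a black box. It observes that the \emph{unweighted} integrand $\prod_j|\hat D_j|^{2c_j}$ is plurisubharmonic on $X^{\hat N}$ for any real $c_j>0$, since its logarithm $\sum_j c_j\log|\hat D_j|^2$ is a nonnegative combination of psh functions. One covers $X$ by finitely many coordinate balls on which every $\phi_j$ oscillates by at most $\epsilon$, picks a maximizer of $F_k$, and applies the sub-mean-value inequality to the psh part on the product of balls containing it; the continuous weight contributes a harmless factor $e^{-Ck\hat N\epsilon}$ and the ball volumes contribute $\rho^{\hat N}$ with $\rho>0$ independent of $k$. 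This yields $\int F_k\ge\rho^{\hat N}e^{-Ck\hat N\epsilon}\max F_k$, and since $\tfrac{1}{k\hat N}\log\rho^{\hat N}=O(1/k)$ you recover exactly the desired reverse estimate. The point is that the mean-value inequality needs only plurisubharmonicity, not that the integrand be a genuine section norm, so irrational $c_j$ cause no difficulty.
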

In the language of \cite{BB}, Lemma~\ref{lem:L2LInf} guarantees a \emph{Bernstein-Markov property} as long as the metrics are continuous and the measure is given by a volume form. 
\begin{proof}[Proof of Lemma~\ref{lem:L2LInf}]
First of all, since $\int_X\omega^n=1$, it is immediate that 
\begin{eqnarray} \liminf_{k\rightarrow \infty} \mathcal L_k(\phi) & = &
 \liminf_{k\rightarrow \infty}-\frac{1}{k\hat N}\log \int_{X^{\hat N}} \prod_{i=1}^m |\hat D_j|^2_{\phi_j} \left(\omega^n\right)^{\otimes \hat N} \nonumber \\
  &\geq&  
 \liminf_{k\rightarrow \infty}-\frac{1}{k\hat N}\log \max_{X^{\hat N}} \prod_{i=1}^m \left|\hat D_j\right|^2_{\phi_j}. \nonumber 
 \end{eqnarray}
 As in \cite{BB}, the other inequality will follow from the mean value inequality and the fact that the norm of the determinant in \eqref{eq:VandermondeExplicit} is subharmonic. More precisely, let $\epsilon>0$ and $\{B_1,\ldots,B_p\}$ be a covering of $X$ by open balls such that $|\phi_j(x)-\phi_j(x')|<\epsilon$ for all $j\in \{1,\ldots,m\}$ whenever $x,x'$ are in one of these balls. Let $(x_1,\ldots,x_{\hat N})\in X^{\hat N}$ be a point where $\prod_{j=1}^m |\hat D_j|^2_{\phi_j}$ attains its maximum and pick $l_1,\ldots,l_{\hat N}\in \{1,\ldots,p\}$ such that $x_i\in B_{l_i}$ for all $i\in \{1,\ldots,\hat N\}$. Letting
 $$\rho = \min_{1\leq l\leq p} \int_{B_l} \omega^n $$
 we get, using \eqref{eq:VandermondeExplicit}, that
 \begin{eqnarray}
 \int_{X^{\hat N}} \prod_{j=1}^m |\hat D_j|^2_{\phi_j} (\omega^n)^{\otimes \hat N} & \geq & \int_{B_{l_1}\times\ldots\times B_{l_{\hat N}}} \prod_{i=1}^k |\hat D_j|^2_{\phi_j} (\omega^n)^{\otimes \hat N} \nonumber \\
 & \geq & \delta^{\hat N} e^{-k\hat N\epsilon} \max_{X^{\hat N}} \prod_{j=1}^m |\hat D_j|^2_{\phi_j} \nonumber 
 \end{eqnarray}
 It follows that
 \begin{eqnarray} \limsup_{k\rightarrow \infty} \mathcal L_k(\phi) & = &
 \limsup_{k\rightarrow \infty}-\frac{1}{k\hat N}\log \int_{X^{\hat N}} \prod_{i=1}^m |\hat D_j|^2_{\phi_j} \left(\omega^n\right)^{\otimes \hat N} \nonumber \\
  &\leq&  
 2\epsilon + \liminf_{k\rightarrow \infty}-\frac{1}{k\hat N}\log \max_{X^{\hat N}} \prod_{i=1}^m \left|\hat D_j\right|^2_{\phi_j} \nonumber 
 \end{eqnarray}
 for any $\epsilon>0$. This proves the theorem. 
\end{proof}

Using Lemma~\ref{lem:L2LInf} and \eqref{eq:ChebDiam}, we get the following asymptotic upper bounds of $\mathcal L_k$:
\begin{lemma}
\label{lem:LUpperBound}
For each $j\in \{1,\ldots,m\}$, let $\phi_j'$ be a continuous metric on $L_j$. Assume $\phi$ is a smooth metric on $\sum_{j=1}^m L_j$, or (more generally) a continuous metric such that the Monge-Ampère measure of $\mathcal P(\phi)$ is absolutely continuous with bounded density. Assume also that
$$ \sum_{j=1}^m \phi_j' \geq \phi.$$ Then
$$ \liminf_{k\rightarrow \infty}\mathcal L_k(\phi_1',\ldots,\phi_m') \geq F\left(\phi\right). $$
\end{lemma}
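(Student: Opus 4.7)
The plan is to combine three ingredients: the Bernstein--Markov type reduction of Lemma~\ref{lem:L2LInf}, the single-bundle Chebyshev asymptotic \eqref{eq:ChebDiam}, and an asymptotic joint monotonicity of $\mathcal L_k$ in the total sum $\sum_{j=1}^m \phi_j$. The hypothesis $\sum \phi_j' \geq \phi$ will enter through the joint monotonicity, allowing transport of a lower bound established at the equilibrium potentials $(\psi_1,\ldots,\psi_m)$ of $\phi$ to the given metrics $(\phi_1',\ldots,\phi_m')$.

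First I would apply Lemma~\ref{lem:L2LInf} together with the trivial inequality
\[ \max_{X^{\hat N}}\prod_{j=1}^m (|\hat D_j|^2_{\phi_j'})^{c_j} \leq \prod_{j=1}^m \max_{X^{\hat N_j}} (|\hat D_j|^2_{\phi_j'})^{c_j}, \]
valid because the $j$-th factor depends only on the first $\hat N_j$ coordinates. Taking $-\frac{1}{k\hat N}\log$ of both sides, using $\hat N_j / \hat N \to 1$ as $k\to\infty$, and applying \eqref{eq:ChebDiam} term-by-term yields a lower bound on $\liminf_k \mathcal L_k(\phi_1',\ldots,\phi_m')$ expressed as a sum of equilibrium energies $E^{eq}_j(\phi_j')$ (with the appropriate volume-normalization factors coming from the $c_j$'s in \eqref{eq:ChebDiam}).

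Next I would apply this same chain of inequalities to the equilibrium potentials $(\psi_1,\ldots,\psi_m)$ of $\phi$ in place of $(\phi_1',\ldots,\phi_m')$. Under the regularity assumption on $\phi$ the $\psi_j$ are continuous, so Lemma~\ref{lem:L2LInf} and \eqref{eq:ChebDiam} apply. By the variational identity \eqref{eq:EnergyAndEqPotentials} the resulting lower bound is exactly $F(\phi)$, giving
\[ \liminf_{k\to\infty} \mathcal L_k(\psi_1,\ldots,\psi_m) \geq F(\phi). \]

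Finally I would establish the asymptotic joint monotonicity of $\mathcal L_k$ in the sum $\sum_j \phi_j$. Writing the integrand as $\prod_j|\det_j|^{2c_j}\cdot\exp\bigl(-2\sum_l \sum_j c_j\lfloor k/c_j\rfloor \phi_j(x_l)\bigr)$ and using $c_j \lfloor k/c_j\rfloor = k + O(1)$, one sees that if $\sum_j \phi_j' \geq \sum_j \tilde\phi_j$ pointwise, then the integrand at $(\phi_j')$ is dominated by the integrand at $(\tilde\phi_j)$ up to a multiplicative factor $e^{O(\hat N)}$, hence $\mathcal L_k(\phi_1',\ldots,\phi_m') \geq \mathcal L_k(\tilde\phi_1,\ldots,\tilde\phi_m) - O(1/k)$. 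Applying this with $\tilde\phi_j = \psi_j$ (using the chain $\sum \phi_j' \geq \phi \geq \sum \psi_j$) and combining with the previous paragraph completes the proof. The main obstacle is precisely this last step: the ordinary monotonicity of $\mathcal L_k$ in each argument is too weak, since $\sum\phi_j' \geq \phi$ gives no pointwise comparison between individual $\phi_j'$ and the $\psi_j$; the asymptotic joint monotonicity in the total sum is what enables the transport from the equilibrium potentials to the given metrics, and it works because the weights $c_j\lfloor k/c_j \rfloor$ on each $\phi_j$ all agree with $k$ up to a uniformly bounded error.
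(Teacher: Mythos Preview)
Your proposal is correct and follows essentially the same route as the paper: both combine Lemma~\ref{lem:L2LInf}, the trivial bound $\max\prod \leq \prod\max$, \eqref{eq:ChebDiam}, \eqref{eq:EnergyAndEqPotentials}, and the pointwise comparison $\sum_j\psi_j \leq \phi \leq \sum_j\phi_j'$ (your ``joint monotonicity''), the only difference being that the paper applies the monotonicity at the $\max$-level inside a single chain of inequalities while you apply it at the $\mathcal L_k$-level after first bounding $\liminf_k\mathcal L_k(\psi_1,\ldots,\psi_m)$. Two minor points: your first paragraph (the bound $\liminf_k\mathcal L_k(\phi_1',\ldots,\phi_m')\geq\sum_j E_j^{eq}(\phi_j')$) is never used and can be dropped, and the monotonicity error should be $o(1)$ rather than $O(1/k)$, since the exponent in the $j$-th factor involves $\sum_{l=1}^{\hat N_j}$ rather than $\sum_{l=1}^{\hat N}$ and one must also invoke $\hat N_j/\hat N\to 1$ (as the paper does) to reduce to a function of $\sum_j\phi_j$.
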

\begin{proof}
Let $\psi_1,\ldots,\psi_m$ be a solution to \eqref{eq:WeakEqMeasure1}-\eqref{eq:WeakEqMeasure2} and note that 
\begin{equation} \label{eq:SolSumBnd} \sum_{j=1}^m \psi_j \leq \phi \leq \sum_{j=1}^m \phi_j'. \end{equation} 
By Theorem~2 in \cite{H21a}, $\psi_1,\ldots,\psi_m$ are continuous. We have
\begin{eqnarray}
 \liminf_{k\rightarrow \infty}\mathcal L_k(\phi_1',\ldots,\phi_m') & = & 
 \liminf_{k\rightarrow \infty}-\frac{1}{k\hat N}\log \int_{X^{\hat N}} \prod_{j=1}^m |\hat D_j|^2_{\phi_j}(\omega^n)^{\otimes \hat N} \nonumber \\
 & \geq & 
 \liminf_{k\rightarrow \infty}-\frac{1}{k\hat N}\log \max_{X^{\hat N}} \prod_{j=1}^m \left|\hat D_j\right|^2_{\phi_j} \nonumber \\
  & \geq & 
 \liminf_{k\rightarrow \infty}-\frac{1}{k\hat N}\log \max_{X^{\hat N}} \prod_{j=1}^m \left|\hat D_j\right|^2_{\psi_j} \nonumber \\
 & \geq & 
 \lim_{k\rightarrow \infty}-\frac{1}{k\hat N}\log\prod_{j=1}^m\max_{X^{\hat N_j}} \left| \hat D_j\right|^2_{\phi_j} \nonumber \\
 & = & \sum_{j=1}^m E^{eq}_j(\psi_j) \nonumber \\
 & = & F(\phi).
\end{eqnarray}
Here, the inequality in the third row uses $\lim_{k\rightarrow \infty} \hat N_j/\hat N=1$ and \eqref{eq:SolSumBnd}. The convergence in the second to last row is given by applying \eqref{eq:ChebDiam} to each term in the sum and the equality in the last row is given by \eqref{eq:EnergyAndEqPotentials}. 
\end{proof}

The key point in the proof of Theorem~\ref{thm:EqCond} is that a mutual asymptotic Fekete sequence for $(L_1,\psi_1), \ldots, (L_m,\psi_m)$ can be used to bound the limit of $\mathcal L_k$ from above by $\sum_{j=1}^m E_j^{eq}(\psi_j) = F\left(\sum_{j=1}^m \phi_j\right)$. This will follow from Lemma~\ref{lem:GoodPtConfigurations}, which we will prove now. 
\begin{proof}[Proof of Lemma~\ref{lem:GoodPtConfigurations}]
Assume first that $\{P_k\}_{k=1}^\infty$ is a mutual asymptotic Fekete sequence for $(L_1,\phi_1),\ldots,(L_m,\phi_m)$. Then 
\begin{eqnarray} \limsup_{k\rightarrow \infty} -\frac{1}{k\hat N}\log\prod_{j=1}^m |\hat D_j(P_k)|^2_{\phi_j} & = & \limsup_{k\rightarrow \infty} -\frac{1}{k\hat N}\log\prod_{j=1}^m \max_{X^{\hat N_j}} |\hat D_j|^2_{\phi_j} \nonumber \\
& \leq & \limsup_{k\rightarrow \infty} -\frac{1}{k\hat N}\log \max_{X^{\hat N}} \prod_{j=1}^m |\hat D_j|^2_{\phi_j}, \nonumber 
\end{eqnarray}
hence $\{P_k\}_{k=1}^\infty$ satisfies \eqref{eq:AsyFeketem}. Conversely, assume $\{P_k\}_{k=1}^\infty$ satisfies \eqref{eq:AsyFeketem}. Using the assumption on $\mathcal L_k$ (the first bullet in Theorem~\ref{thm:EqCond}), together with $\sum \psi_j \leq \phi = \sum_{j=1}^m \phi_j $ and  Lemma~\ref{lem:L2LInf}, we get
\begin{eqnarray}
\limsup_{k\rightarrow \infty} -\frac{1}{k\hat N}\log \prod_{j=1}^m\left| \hat D_j(P_k)\right|^2_{\psi_j} & \leq & \limsup_{k\rightarrow \infty} -\frac{1}{k\hat N}\log \prod_{j=1}^m \left| \hat D_j(P_k)\right|^2_{\phi_j}
\nonumber \\
& = & \limsup_{k\rightarrow \infty} - \frac{1}{k\hat N}\log  \max_{X^{\hat N}}  \prod_{j=1}^m \left|\hat D_j\right|^2_{\phi_j} \nonumber \\
& = & \lim_{k\rightarrow \infty} \mathcal L_k(\phi_1,\ldots,\phi_m) \nonumber \\
& = & F(\phi) \nonumber \\
& = & \sum_{j=1}^m E^{eq}_{j}(\psi_j) \nonumber \\
& = &  \sum_{j=1}^{m}  \lim_{k\rightarrow \infty}- \frac{1}{k\hat N}\log \max_{X^{\hat N}}|\hat D_j|^2_{\psi_j} \nonumber \\
& = & \lim_{k\rightarrow \infty} - \frac{1}{k\hat N}\log \prod_{j=1}^m \max_{X^{\hat N}}|\hat D_j|^2_{\psi_j} \label{eq:BoundOnSum}
\end{eqnarray}
Trivially, we have
$$
\liminf_{k\rightarrow \infty} -\frac{1}{k\hat N}\log \left| \hat D_j(P_k)\right|^2_{\psi_j} \geq \lim_{k\rightarrow \infty} -\frac{1}{k\hat N}\log \max_{X^{\hat N_j}}\left| \hat D_j\right|^2_{\psi_j}
$$
for each $j\in \{1,\ldots,m\}$. Now, assume 
$$\limsup_{k\rightarrow \infty} -\frac{1}{k\hat N}\log \left| \hat D_j(P_k)\right|^2_{\psi_j} > \lim_{k\rightarrow \infty} -\frac{1}{k\hat N}\log \max_{X^{\hat N_j}}\left| \hat D_j\right|^2_{\psi_j}$$
for some $j\in\{1,\ldots,m\}$. This would imply 
$$ \limsup_{k\rightarrow \infty} -\frac{1}{k\hat N}\log \prod_{j=1}^m\left| \hat D_j(P_k)\right|^2_{\psi_j} > \lim_{k\rightarrow \infty} -\frac{1}{k\hat N}\log \prod_{j=1}^m\max_{X^{\hat N}}|\hat D_j|^2_{\psi_j} $$
contradicting \eqref{eq:BoundOnSum}. We conclude that 
$$ \lim_{k\rightarrow \infty} \frac{1}{k\hat N}\log \left|\hat  D_j(P_K)\right|^2_{\psi_j} = \lim_{k\rightarrow \infty} \frac{1}{k\hat N}\log \max_{X^{\hat N}}\left|\hat  D_j\right|^2_{\psi_j} $$
for each $j\in \{1,\ldots,m\}$, and hence that $\{P_k\}_{k=1}^\infty$ is a mutual asymptotic Fekete sequence for $(L_1,\psi_1), \ldots, (L_m,\psi_m)$.
\end{proof}
We can now prove Theorem~\ref{thm:EqCond}.
\begin{proof}[Proof of Theorem~\ref{thm:EqCond}]
Assume the first bullet in the theorem holds and, 
for each $k$, let $P_k\in X^{\hat N}$ be a configuration where $\prod_{j=1}^m \left|\hat D_j\right|^2_{\phi_j}$ attains its maximum. By Lemma~\ref{lem:GoodPtConfigurations}, $P_k\in X^{\hat N}$ is a mutual Fekete configuration for for $(L_1,\psi_1), \ldots, (L_m,\psi_m)$.

Conversely, assume the second bullet in theorem holds and let $\{P_k\}_{k=1}^\infty$ be a sequence of configurations which is asymptotically Fekete with respect to $(L_j,\psi_j)$ for all $j\in \{1,\ldots,m\}$. Then, using Lemma~\ref{lem:L2LInf}, 
\begin{eqnarray}
\limsup_{k\rightarrow \infty} \mathcal L_k(\phi_1,\ldots,\phi_m) & \leq & \limsup_{k\rightarrow \infty} -\frac{1}{k\hat N}\log \max_{X^{\hat N}}  \prod_{j=1}^m \left|\hat D_j\right|^2_{\phi_j} \nonumber \\
& \leq & \limsup_{k\rightarrow \infty} -\frac{1}{k\hat N}\log  \prod_{j=1}^m \left|\hat D_j(P_k) \right|_{\phi_j}^2\nonumber \\
& = & \limsup_{k\rightarrow \infty} -\sum_{j=1}^m \frac{1}{k\hat N}\log \left| \hat D_j(P_k) \right|_{\psi_j}^2 \nonumber \\
&  & +\lim_{k\rightarrow \infty} 2\int_X \left(\sum_{j=1}^m(\phi_j - \psi_j) \right)\delta^{\hat N}(P_k) \label{eq:PhiPsiDiscrepancy} \\
& = & \sum_{j=1}^m E^{eq}_j(\psi_j) \nonumber \\
& = & F(\phi) \nonumber 
\end{eqnarray}
where the integral in \eqref{eq:PhiPsiDiscrepancy} vanishes since $\delta^{\hat N}(P_k)\rightarrow \mu_{(L_1,\psi_1)}$ weakly and $\sum_{j=1}^m (\phi_j- \psi_j)$ is continuous and vanishes on the support of $\mu_{(L_1,\psi_1)}$. 
The reverse inequality follows from Lemma~\ref{lem:LUpperBound}. We conclude that the first bullet in in Theorem~\ref{thm:EqCond} holds.
\end{proof}

We end this section with the proof of Theorem~\ref{thm:FeketeConv}. 
\begin{proof}[Proof of Theorem~\ref{thm:FeketeConv}]
By Lemma~\ref{lem:GoodPtConfigurations}, $\{P_k\}_{k=1}^m$ is a mutual asymptotic Fekete sequence of $(L_1,\psi_1),\ldots,(L_m,\psi_m)$. In particular, it is asymptotically Fekete for $(L_1,\psi_1)$ and the theorem follows from \eqref{eq:EquiDistm1}. 
\end{proof}

\section{The Toric Setting}
\label{sec:Toric}
In this section we will focus on the toric setting and prove Theorem~\ref{thm:ToricAsyFekete}. 
Assume that the data $(X,\omega)$ and $(L_1,\phi_1),\ldots,(L_m,\phi_m)$ are toric. In other words, there is an action by $(\bbC^*)^n$ on $X$ with a dense orbit, the action extends to the line bundles $L_1,\ldots,L_m$ over $X$ and $\phi_1, \ldots,\phi_m$ and $\omega$ are invariant under the the induced action by the compact torus $(S^1)^n$ on $\sum_{j=1}^mL_j$. We will identify the dense orbit in $X$ with $(\bbC^*)^n$ and write $(\bbC^*)^n\subset X$. 
The main point of this section is to prove the following theorem:
\begin{theorem}
\label{thm:ToricLConv}
Assume, as explained above, that $(X,\omega)$ and the data $$(L_1,\phi_1,\ldots,L_m,\phi_m)$$ are toric. Assume also that $\phi_1,\ldots,\phi_m$ are smooth or (more generally) that the Monge-Ampère measure of $\mathcal P(\sum_{j=1}^m \phi_j)$ is absolutely continuous with bounded density. Then both conditions in Theorem~\ref{thm:EqCond} holds. 
\end{theorem}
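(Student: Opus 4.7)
The plan is to establish the first bullet of Theorem~\ref{thm:EqCond} in the toric setting, namely $\lim_{k\to\infty}\mathcal L_k(\phi_1,\ldots,\phi_m)=F\bigl(\sum_{j=1}^m\phi_j\bigr)$; by that theorem the existence of a mutual asymptotic Fekete sequence then comes for free. The lower bound on $\mathcal L_k$ (which, in the sign convention used in the proofs of Lemmas~\ref{lem:L2LInf} and \ref{lem:LUpperBound}, is the $\geq F$ direction) is already supplied by Lemma~\ref{lem:LUpperBound} applied to $\phi_j'=\phi_j$. The entire task is therefore the matching upper bound.

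The first step is to reduce this upper bound to a question about discrete optimal transport in $\bbR^n$, using the toric dictionary. Each $H^0(X,kL_j)$ has the canonical basis $\{s_\alpha\}_{\alpha\in kP_j\cap\bbZ^n}$ indexed by the lattice points of the moment polytope $P_j$, and each toric metric $\phi_j$ corresponds via the logarithmic chart to a convex function $u_j$ on $\bbR^n$. On the open orbit $(\bbC^*)^n\subset X$, each section $s_\alpha$ becomes the monomial $z^\alpha$. Writing out $\prod_j(|\hat D_j|^2_{\phi_j})^{c_j}$ through Leibniz expansions of the determinants and integrating over the orbits of the compact torus $(S^1)^n$ on each coordinate of $X^{\hat N}$, the oscillatory cross terms vanish by orthogonality of monomials. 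In the case $m=1$ treated in \cite{BPP} and \cite{HPP} this turns the integrated determinant into a permanent which, after a saddle-point analysis in the large-$k$ limit, is controlled by a discrete optimal transport cost between the empirical measure on the rescaled lattice points of $kP_j$ and the empirical measure on the moment-map images of the $x_l$'s. For $m\geq 2$ the expansion produces many cross terms indexed by $m$-tuples of permutations; the task is to isolate a dominant family of terms and to ensure the rest cannot spoil the bound via sign cancellations. This is where Lemma~\ref{lem:UniqueOptMap} will be used: for generic configurations only one permutation per factor solves the corresponding matching problem, pinning down the sign. The output of this step is Lemma~\ref{lem:LowerBoundCost}, giving the desired upper bound on $\limsup\mathcal L_k$ in terms of a discrete multi-marginal transport cost.

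The second step is to show that these discrete transport costs converge to the corresponding continuous cost $C(\mu,\nu)$ (with cost function $-\langle\cdot,\cdot\rangle$), where $\mu$ is the push-forward of the equilibrium measure of $(\sum L_j,\sum\phi_j)$ under the moment map and $\nu$ is the Lebesgue-type measure on the rescaled polytope associated with $\sum_{j}c_j P_j$. The central instrument here is the perturbation estimate of Lemma~\ref{lem:CLessThanW1}: provided one of the measures has finite first moment and the other has compact support, it bounds the variation of $C$ in terms of a mixture of Wasserstein-$1$ distance and an $L^\infty$-type optimal-transport distance. The regularity hypothesis on $\mathcal P(\sum\phi_j)$ enters precisely here: by Theorem~2 of \cite{H21a} it guarantees that the equilibrium measure is absolutely continuous with bounded density, so its moment-map push-forward has finite first moment. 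Given this, one constructs a sequence $\{x_i\}\subset\bbR^n$ whose empirical measures $\frac1N\sum_{i=1}^N\delta_{x_i}$ approximate the push-forward of the equilibrium measure in $W_1$; plugging this sequence into the discrete costs and applying Lemma~\ref{lem:CLessThanW1} shows that $C_k\to C(\mu,\nu)$. The toric Legendre correspondence between equilibrium energies and transport costs then identifies $C(\mu,\nu)$ with $F(\sum_j\phi_j)$, closing the upper bound.

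The main obstacle I expect is the combinatorial analysis in Lemma~\ref{lem:LowerBoundCost}: isolating, among the vast family of $m$-tuples of permutations arising in the expansion of $\prod_j|\hat D_j|^2$, the contributions that survive after integration over the compact torus, verifying their sign, and bounding the subdominant ones to negligible exponential order. The passage from compactly supported measures (natural when $\phi_j$ are smooth on a projective toric manifold but not in the general polytope setting) to measures with merely finite first moment is the second delicate point, and the use of Lemma~\ref{lem:CLessThanW1} rather than the standard Wasserstein-$p$ machinery is what enables this extension, as highlighted in the remark following Theorem~\ref{thm:ToricLConv}.
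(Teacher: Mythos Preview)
Your proposal is correct and follows essentially the same route as the paper: reduce to the first bullet of Theorem~\ref{thm:EqCond}, use Lemma~\ref{lem:LUpperBound} for one inequality, and for the other expand the determinants, integrate over $(S^1)^n$-orbits to obtain Lemma~\ref{lem:LowerBoundCost} (with Lemma~\ref{lem:UniqueOptMap} controlling the cross terms), then pass to the limit via Lemma~\ref{lem:CLessThanW1} and a $W_1$-approximating sequence whose existence is guaranteed by the regularity hypothesis through Theorem~2 of \cite{H21a}. Two small terminological slips to correct: the push-forward is under the log map $\pi:(\bbC^*)^n\to\bbR^n$, not the moment map, and the bound in Lemma~\ref{lem:LowerBoundCost} is a \emph{sum} of $m$ separate two-marginal costs $C(\delta^{N_j}(x),\delta^{N_j}(p^j/k))$, one for each polytope $P_j$ with its own target $\nu_j$, rather than a single multi-marginal cost against a measure on $\sum_j c_j P_j$.
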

The theorem will follow from a proposition verifying the first bullet in Theorem~\ref{thm:EqCond}. Since the second bullet in Theorem~\ref{thm:EqCond} is invariant under scaling of $L_1,\ldots,L_m$, we may without loss of generality assume that
$$ \Vol(L_1) = \ldots = \Vol(L_m) = 1 $$
and hence
$$ c_1 = \ldots = c_m = 1. $$
Note that this means $\hat D_j = D_j$ and $\hat N_j=N_j$ for all $j\in \{1,\ldots,m\}$ and $\hat N = N$.

Before we state the proposition that implies Theorem~\ref{thm:ToricLConv} we will introduce a regularity condition for measures on toric manifolds. Let $s$ be a section on a toric line bundle over $X$ which vanishes on all $(\mathbb C^*)^n$-invariant divisors of $X$ (equivalently, $s$ corresponds to a point in the interior of the associated polytope). Let $|\cdot|$ be a $(s^1)^n$-invariant metric on the line bundle of $s$. We will be interested in those measures $\mu$ on $X$ which does not charge pluripolar sets and for which $\log|s|$ is integrable, i.e. 
\begin{equation} \label{eq:IntegrabilityCond} \int \log |s| \mu > -\infty. \end{equation}

The significance of this property is that the push forward of $\mu$ under the (partially defined) log map $\pi:X\mapsto \mathbb R^n$ has finite first moment if and only if $\mu$ satisfies \eqref{eq:IntegrabilityCond}  (see Lemma~\ref{lem:Finite1stMoment} below). Clearly, $\mu$ satisfies \eqref{eq:IntegrabilityCond} if $\mu$ is absolutely continuous with bounded density. 
\begin{proposition}
\label{prop:ToricLConv}
Assume that the data $(L_1,\phi_1,\ldots,L_m,\phi_m)$ is toric, 
$$ \Vol(L_1) = \ldots = \Vol(L_m) = 1, $$
and that the equilibrium measure $\mu_{(L_1,\ldots,L_m,\sum_{j=1}^m \phi_j)}$ satisfies \eqref{eq:IntegrabilityCond}. Then the first bullet in Theorem~\ref{thm:EqCond} holds. 
\end{proposition}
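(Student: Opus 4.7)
The plan is to establish matching upper and lower bounds for $\lim_{k\to\infty}\mathcal L_k(\phi_1,\ldots,\phi_m)$, both equal to $F(\sum_{j=1}^m\phi_j)$. The lower bound on $\liminf\mathcal L_k$ is essentially handed to us by the $m=1$ case in \cite{BB} applied factor by factor, in the spirit of Lemma~\ref{lem:LUpperBound}: one uses $\sum\psi_j\leq\sum\phi_j$ together with $\lim \hat N_j/\hat N=1$ and the Chebyshev-type asymptotics \eqref{eq:ChebDiam} for each $j$ to get $\liminf\mathcal L_k\geq\sum E^{eq}_j(\psi_j)=F(\sum\phi_j)$, and this argument does not use the stronger bounded-density hypothesis. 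The real content of the proposition is therefore the matching upper bound on $\limsup\mathcal L_k$, which I would attack by passing through discrete optimal transport, exploiting the toric structure throughout.

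The first main step is to establish Lemma~\ref{lem:LowerBoundCost}, bounding $\mathcal L_k$ from above by a discrete optimal transport cost. Here one uses the distinguished monomial basis $\{\chi^p\}_{p\in k\Delta_j\cap\bbZ^n}$ of $H^0(X,kL_j)$ determined by the lattice points of the moment polytope $\Delta_j$ of $L_j$. Expanding each $|D_j|^2_{\phi_j}$ as a double sum over pairs of permutations $(\sigma_j,\tau_j)\in S_{N_j}\times S_{N_j}$, taking the product over $j$, and integrating against $(\omega^n)^{\otimes N}$ after averaging over the compact torus $(S^1)^n$ (which acts by phases on each $\chi^p$) kills all tuples whose total monomial exponents do not balance. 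For $m=1$ this is the BPP/HPP reduction of the integral to a permanent representing a discrete optimal transport cost with cost function $-\langle\cdot,\cdot\rangle$ between the lattice-point empirical measure and the empirical measure of the logarithmic images of the evaluation points. For $m\geq 2$ many non-diagonal tuples survive, and the resulting expression is no longer manifestly positive; this is where Lemma~\ref{lem:UniqueOptMap} (uniqueness of optimal transport plans between generic discrete measures) is brought in to isolate a dominant diagonal-type contribution and to bound the cross terms, producing the required one-sided estimate.

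The second step is to approximate the relevant continuous transport cost by such discrete ones. Here Lemma~\ref{lem:CLessThanW1} bounds differences of transport costs for $-\langle\cdot,\cdot\rangle$ in terms of a Wasserstein-1 perturbation, provided one measure has compact support (supplied by the moment polytopes $\Delta_j$) and the other has finite first moment. The integrability hypothesis \eqref{eq:IntegrabilityCond} on $\mu_{(L_1,\ldots,L_m,\sum\phi_j)}$ is precisely what guarantees, via Lemma~\ref{lem:Finite1stMoment}, that the push forward $\pi_*\mu$ under the log map $\pi\colon(\bbC^*)^n\to\bbR^n$ has finite first moment. A standard construction then produces a sequence of points $\{y_i\}_{i=1}^\infty\subset\bbR^n$ whose empirical averages converge to $\pi_*\mu$ in $W_1$, and lifting them via the exponential section of $\pi$ gives configurations on $(\bbC^*)^n\subset X$. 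Feeding these configurations into the discrete bound of Step~1, Lemma~\ref{lem:CLessThanW1} converts the $W_1$ approximation into convergence of the discrete transport costs to a continuous transport cost which, by Legendre duality in the toric picture (identifying $E^{eq}_j$ with the Legendre transform of a certain convex functional on $\Delta_j$), equals $F(\sum\phi_j)$.

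The hardest step will be Step~1, i.e.\ the combinatorial control of cross terms when $m\geq 2$. The known $m=1$ arguments rest on the fact that the double sum over $(\sigma,\tau)$ collapses to a nonnegative permanent; this positivity is destroyed by the product $\prod_j|D_j|^2_{\phi_j}$, so one must instead extract a genuinely dominant contribution and show the remaining cross terms cannot swamp the asymptotic. Ensuring that the generic-uniqueness hypothesis of Lemma~\ref{lem:UniqueOptMap} is actually usable after the torus integration, without degenerating on a positive-measure locus of configurations and without requiring compactness of the supports involved, is where I expect the bulk of the technical difficulty, and it is the reason the argument must go through the noncompact log picture on $\bbR^n$ rather than staying on the polytope side.
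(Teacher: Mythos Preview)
Your proposal is correct and follows essentially the same route as the paper: the lower bound via Lemma~\ref{lem:LUpperBound}, the upper bound via the discrete transport inequality of Lemma~\ref{lem:LowerBoundCost} (proved by expanding the determinants, torus-averaging, and invoking Lemma~\ref{lem:UniqueOptMap} to control cross terms), and then passing to the continuous cost using Lemma~\ref{lem:Finite1stMoment} together with Lemma~\ref{lem:CLessThanW1}. The only ingredient you do not name explicitly is the $R(\cdot,\cdot)$ half of Lemma~\ref{lem:CLessThanW1} paired with Lemma~\ref{lem:TransportMapConvergence}, which handles the convergence $\delta^{N_j}(p^j/k)\to\nu_j$ on the polytope side; this slots directly into your Step~2 without changing the strategy.
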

Most of this section is dedicated to proving Proposition~\ref{prop:ToricLConv}. We will begin by recalling some well known facts about toric Kähler geometry (see for example \cite{BB} for a more detailed exposition). The main points are given by Lemma~\ref{lem:ToricLk} and Lemma~\ref{lem:ToricEnergy} below. 
Associated to each pair $(X,L_j)$ we get a rational polytope $P_j$ in $\bbR^n$. Let 
$$w = (w_1,\ldots,w_n)= x+\sqrt{-1}y = (x_1+\sqrt{-1}y_1,\ldots,x_n+\sqrt{-1}y_n) $$
be logarithmic coordinates on $(\bbC^*)^n$. Each $L_j$ admits a unique $(\bbC^*)^n$-invariant section defined over $(\mathbb C^*)^n\subset X$. Using this section to trivialize $L_j$ over $(\bbC^*)^n$ we get that each point $p\in \bbZ^n$ defines a holomorphic function $f_p:(\bbC^*)^n\rightarrow \mathbb C$
$$ f_p(w) = e^{\langle w, p \rangle} $$ 
which extends to a global section $s_p$ of $kL_j$ if and only if $p\in kP_j\cap \bbZ^n$. Moreover, 
\begin{equation} \label{eq:basis} \{f_p: p\in kP_j\cap \bbZ^n\} \end{equation}
is a basis for $H^0(X,kL_j)$ and any $(S^1)^n$-invariant continuous metric $\phi_j$ on $L_j$ may be represented by a continuous function on $\bbR^n$ which, abusing notation, we will also denote $\phi_j$. This function satisfies the growth condition 
$$ \sup_X \left| \phi_j - h_{P_j}\right| < C $$
on $\mathbb R^n$ for some constant $C$ where $h_{P_j}$ is the support function of $P_j$
$$ h_{P_j}(x) = \sup_{p\in P_j} \langle x,p\rangle. $$

We will use $h_{P_j}$ as the reference when defining $E^{eq}_j$ and $|D_j|^2_{\phi_j}$. It is easy to verify that the elements in \eqref{eq:basis} are pairwise orthogonal with respect to the inner product induced by any $(S^1)^n$-invariant metric on $L_j$. It follows that
\begin{equation} \label{eq:ONBasis} \left\{\frac{f_p}{\sqrt{\int_{(\mathbb C^*)^n}|f_p|^2e^{-2h_{P_j}}\omega^n}}: p\in kP_j\cap \bbZ^n\right\} \end{equation}
is an orthonormal basis with respect to the inner product induced by $h_{P_j}$.

For each $j$ and $k$, enumerating the points in $kP_j\cap \bbZ^n$ 
$$ kP_j\cap \bbZ^n = \{ p_1^j,\ldots,p_{N_j}^j\}. $$
gives
\begin{lemma}[See section 7.1 in \cite{BPP} for the case $m=1$]
\label{lem:ToricLk}
Let $p^j$ and the coordinates $x+\sqrt{-1}y$ be defined as above. Assume $ L_1^n = \ldots = L_m^n = 1 $. Then $\mathcal L_k$ takes the following form:
\begin{equation}
    \mathcal L_k(\phi_1,\ldots,\phi_m) =  -\frac{1}{kN} \log \int_{((\bbC^*)^n)^{N}} \prod_{j=1}^m \left|D_j\right|^2_{\phi_j}(\omega^n)^{\otimes N} + o_k(1)
    \label{eq:ToricLk}
\end{equation}
where 
$$ \left|D_j\right|^2_{\phi_j} = \left|\det\left(e^{\langle x_i+\sqrt{-1}y_i,p^j_l \rangle} \right)_{1\leq i,l\leq N_j}\right|^2e^{-2k\sum_{i=1}^{N_j} \phi_j(x_i)}. $$
and $o_k(1)\rightarrow 0$ as $k\rightarrow \infty$.
\end{lemma}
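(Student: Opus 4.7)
The approach is to unpack the definition of $\mathcal L_k$ by substituting the unnormalized monomial basis $\{f_p\colon p\in kP_j\cap\mathbb Z^n\}$ for the orthonormal basis used to define $|\hat D_j|^2_{\phi_j}$, and then to show that the resulting discrepancy vanishes asymptotically.

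First, I would observe that $X\setminus(\mathbb C^*)^n$ is a proper analytic subvariety of $X$ and hence has $\omega^n$-measure zero, so the integral defining $\mathcal L_k$ equals the corresponding integral over $((\mathbb C^*)^n)^N$. Next, by \eqref{eq:ONBasis}, the orthonormal basis of $H^0(X, kL_j)$ relative to the reference $h_{P_j}$ and $\omega^n$ consists of $\{f_{p_l^j}/\|f_{p_l^j}\|\}_l$, where $\|f_p\|^2 := \int |f_p|^2 e^{-2kh_{P_j}}\omega^n$. Expanding $|\hat D_j|^2_{\phi_j}$ in this basis and substituting $f_p(w) = e^{\langle w,p\rangle}$ yields
$$|\hat D_j|^2_{\phi_j} = \frac{\bigl|\det(e^{\langle w_i, p_l^j\rangle})_{i,l}\bigr|^2 e^{-2k\sum_i\phi_j(x_i)}}{\prod_l \|f_{p_l^j}\|^2}.$$
Plugging this into the definition of $\mathcal L_k$ and factoring the (integration-variable-independent) denominator out of the integral gives
$$\mathcal L_k(\phi_1,\ldots,\phi_m) = -\frac{1}{kN}\log \int_{((\mathbb C^*)^n)^N}\prod_j |D_j|^2_{\phi_j}(\omega^n)^{\otimes N} + \frac{1}{kN}\sum_{j=1}^m\sum_{l=1}^{N_j}\log \|f_{p_l^j}\|^2,$$
where $|D_j|^2_{\phi_j}$ is as in the lemma.

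It remains to show that the additive error is $o_k(1)$. Using $(S^1)^n$-invariance of $\omega^n$ and writing its $(S^1)^n$-quotient density as $\rho(x)dx$, one has
$$\|f_p\|^2 = (2\pi)^n \int_{\mathbb R^n} e^{2(\langle x, p\rangle - kh_{P_j}(x))}\rho(x)dx.$$
Since $p\in kP_j$, the exponent is non-positive and attains its maximum $0$ at $x=0$; a Laplace-type analysis then gives the uniform bound $|\log \|f_p\|^2| = O(\log k)$ over $p\in kP_j\cap\mathbb Z^n$. Summing over the $N_j \sim N$ lattice points and dividing by $kN$ yields $\frac{1}{kN}\sum_{j,l}\log \|f_{p_l^j}\|^2 = O((\log k)/k) = o_k(1)$. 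This is exactly the $m=1$ argument of Section~7.1 in \cite{BPP}, applied in parallel to each $j$ and summed, which is legitimate because the normalization constants for different $j$ factor independently.

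The main obstacle is the uniform Laplace estimate $|\log \|f_p\|^2| = O(\log k)$. For $p$ with $p/k$ bounded away from $\partial P_j$, the exponent has a strict maximum at the origin and one obtains the sharp asymptotic $\log \|f_p\|^2 \sim -n\log k$. For $p$ near the boundary the exponent vanishes along a half-cone from the origin, so the integral is of order $1$ and the estimate is $\log \|f_p\|^2 = O(1)$. One handles both regimes uniformly by partitioning $kP_j\cap \mathbb Z^n$ into a deep interior (where standard Laplace asymptotics apply) and a thin boundary shell (whose cardinality is $o(N_j)$, so any $O(\log k)$-per-point bound is enough); this is precisely where the assumption $\Vol(L_j)=1$ simplifies bookkeeping.
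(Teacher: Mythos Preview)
Your reduction is exactly the paper's: pass to $((\mathbb C^*)^n)^N$, insert the orthonormal basis \eqref{eq:ONBasis}, and factor out the normalization constants to obtain
\[
\mathcal L_k(\phi_1,\ldots,\phi_m) + \frac{1}{kN}\log\int \prod_j |D_j|^2_{\phi_j}(\omega^n)^{\otimes N}
= \frac{1}{kN}\sum_{j=1}^m\sum_{p\in kP_j\cap\mathbb Z^n}\log\|f_p\|^2.
\]
The only difference is how you bound the right-hand side. The paper does not use Laplace asymptotics at all: since $|f_p|^2e^{-2kh_{P_j}}\le 1$ the error is $\le 0$; for the lower bound one simply restricts the integral to a fixed ball $B_\epsilon$, on which $\langle x,p\rangle - kh_{P_j}(x)\ge -2kR\epsilon$ with $R=\sup_{q\in P_j}|q|$, giving $\log\|f_p\|^2\ge -4kR\epsilon+\log\bigl(\int_{B_\epsilon}\pi_\#\omega^n\bigr)$ uniformly in $p$. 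After summing and dividing by $kN$ one lets $k\to\infty$ and then $\epsilon\to 0$. This is a two-line argument and avoids any case analysis.

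Your route via a uniform bound $|\log\|f_p\|^2|=O(\log k)$ also works and gives the sharper rate $O((\log k)/k)$, but your justification needs a small repair. The phase $x\mapsto \langle x,p\rangle - kh_{P_j}(x)$ is not smooth at its maximum (the support function $h_{P_j}$ is only piecewise linear), so ``standard Laplace asymptotics'' do not apply literally, and the interior/boundary partition you sketch is unnecessary. Instead, use homogeneity: substituting $x=u/k$ gives $\|f_p\|^2 = k^{-n}\int e^{2(\langle u,p/k\rangle - h_{P_j}(u))}\rho(u/k)\,du$, and restricting to $|u|\le 1$ already yields $\|f_p\|^2\ge Ck^{-n}$ uniformly over all $p\in kP_j\cap\mathbb Z^n$, since $\rho$ is continuous and positive at the origin. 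This gives your $O(\log k)$ bound directly, with no boundary shell needed.
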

\begin{proof}
Using \eqref{eq:ONBasis} as the basis in the definition of $|D_j|^2_{\phi_j}$ it is easy to verify that
\begin{eqnarray} \mathcal L_k(\phi_1,\ldots,\phi_m) & + & \frac{1}{kN} \log \int_{((\bbC^*)^n)^{N}} \prod_{j=1}^m \left|D_j\right|^2_{\phi_j}(\omega^n)^{\otimes N} \nonumber \\
& = & \frac{1}{kN}\log \prod_{j=1}^N \prod_{p\in kP_j\cap \mathbb Z} \int_{(\mathbb C^*)^n} |f_p|^2e^{-2kh_{P_j}} \omega^n. \label{eq:errortermLk} 
\end{eqnarray}
I thus suffices to show that \eqref{eq:errortermLk} vanishes as $k\rightarrow \infty$. 
Since 
$$ |f_p(w)|^2e^{-2kh_{P_j}(x)} = e^{2\langle x,p\rangle-2kh_P(x)} \leq 1 $$
we get that the limit of \eqref{eq:errortermLk} is non-positive. Moreover, let $R>0$ be a constant such that for each $j\in \{1,\ldots,m\}$, $P_j$ is contained in the unit ball of radius $R$ centered at the origin. For $\epsilon > 0$, let $B_\epsilon$ be the unit ball in $\mathbb R^n$ of radius $\epsilon$ centered at the origin and $\delta = \int_{B_\epsilon} \pi_\#(\omega^n)$. Then
\begin{eqnarray} \eqref{eq:errortermLk} & \geq & \frac{1}{kN} \log \prod_{j=1}^m \prod_{p\in kP_j\cap \mathbb Z} \int_{B_\epsilon} e^{2\langle x,p\rangle - 2kh_{P_j}(x)} \pi_\#(\omega^n) \nonumber \\
& \geq & \frac{1}{kN}\log \prod_{j=1}^m \prod_{p\in kP_j\cap \mathbb Z} e^{-k4R\epsilon}\delta \nonumber \\
& = & \frac{N_j}{N}\sum_{j=1}^m \left(-4R\epsilon+\frac{\log(\delta)}{k}\right)
\end{eqnarray}
which converges to $-4mR\epsilon$ as $k\rightarrow \infty$. Letting $\epsilon\rightarrow 0$ in this inequality proves that $\eqref{eq:errortermLk}\rightarrow 0$ as $k\rightarrow \infty$.
\end{proof}

Similarly as $\mathcal L^k$, $F$ has an alternative form in the toric setting. To write it, recall that the Legendre Transform of a lower semi-continuous function $\phi$ on $\mathbb R^n$ is given by
$$ \phi^*(p) = \sup_{x\in \bbR^n} \langle x,p \rangle -\phi(x). $$ 
If we use $\nu_{P_j}$ to denote the Lebesgue measure on $P_j$ then the equilibrium energy $E_{L_j}^{eq}(\phi_j)$ (with respect to the reference metric $h_{P_j}$) is, up to a volume normalizing constant, given by integration of $\phi_j^*$ against $\nu$, i.e.
\begin{equation} \label{eq:ToricMAEnergy} \hat E_{L_j}(\phi) = -\frac{\int_{P_j} \phi^* \nu_{P_j}}{L_j^n}. \end{equation}
\begin{lemma}
\label{lem:ToricEnergy}
Assume $L_1=\ldots=L_m=1$ and let $(\psi_1,\ldots,\psi_m)$ be the unique solution to \eqref{eq:WeakEqMeasure1}-\eqref{eq:WeakEqMeasure2} for $\phi = \sum_{j=1}^m \phi_j$. Then $\psi_1,\ldots,\psi_m$ are $(S^1)^n$-invariant and 
\begin{equation} \label{eq:ToricEnergy} F(\phi) = \sum_{j=1}^m -\int_{P_j} \psi_j^* \nu_{P_j}. \end{equation}
\end{lemma}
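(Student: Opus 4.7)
The plan is to combine the variational characterization \eqref{eq:EnergyAndEqPotentials} of $F(\phi)$ with the toric Legendre-transform formula \eqref{eq:ToricMAEnergy}. The only non-trivial preliminary step is the $(S^1)^n$-invariance of the equilibrium potentials, which is needed to meaningfully speak of $\psi_j^*$ on $P_j$.

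\textbf{Step 1: Invariance.} The data $(L_1,\phi_1),\ldots,(L_m,\phi_m)$, and hence the equations \eqref{eq:WeakEqMeasure1}--\eqref{eq:WeakEqMeasure2} for $\phi=\sum_j\phi_j$, are preserved by the $(S^1)^n$-action. Hence for any $g\in (S^1)^n$ the pulled-back tuple $(g^*\psi_1,\ldots,g^*\psi_m)$ is again a solution. By the uniqueness statement from \cite{H21a} (the solution is unique up to adding a tuple of constants $(c_1,\ldots,c_m)$ with $\sum_j c_j=0$), there exist continuous functions $c_j\colon(S^1)^n\to\mathbb{R}$ with $\sum_j c_j(g)=0$ satisfying $g^*\psi_j=\psi_j+c_j(g)$. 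The cocycle identity $(g_1g_2)^*\psi_j=g_1^*g_2^*\psi_j$ forces each $c_j$ to be a continuous homomorphism from the compact group $(S^1)^n$ to $\mathbb{R}$, which must be trivial. Hence $c_j\equiv 0$ and each $\psi_j$ is $(S^1)^n$-invariant, so it corresponds to a convex function on $\mathbb{R}^n$ differing from $h_{P_j}$ by a bounded amount, and $\psi_j^*$ is a well-defined convex function on $P_j$.

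\textbf{Step 2: Assemble.} The variational property \eqref{eq:EnergyAndEqPotentials} from \cite{H21a} gives
$$ F(\phi)=\sum_{j=1}^{m} E_j^{eq}(\psi_j). $$
Since $L_j^n=1$, the toric formula \eqref{eq:ToricMAEnergy} applied to the $(S^1)^n$-invariant continuous metric $\psi_j$ yields $E_j^{eq}(\psi_j)=-\int_{P_j}\psi_j^*\,\nu_{P_j}$. Summing over $j$ delivers \eqref{eq:ToricEnergy}.

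\textbf{Main obstacle.} The only delicate point is the uniqueness-plus-homomorphism argument for invariance; once it is in hand, both ingredients of Step 2 are direct citations. A subtlety worth flagging in the write-up is that \eqref{eq:ToricMAEnergy} must be legitimately applied to $\psi_j$, which is a semi-positive weight and in general only continuous (continuity being guaranteed in the setting where this lemma will be invoked; in general the identity still holds by approximating $\psi_j$ by smooth toric metrics and using monotone convergence of the Legendre transforms on $P_j$, together with continuity of $E_{L_j}^{eq}$ along decreasing sequences of continuous weights).
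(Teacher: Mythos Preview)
Your proof is correct and follows the same route as the paper: deduce $(S^1)^n$-invariance of the $\psi_j$ from the uniqueness theorem in \cite{H21a}, then combine \eqref{eq:EnergyAndEqPotentials} with the toric formula \eqref{eq:ToricMAEnergy}. Your Step~1 is in fact more careful than the paper's one-line appeal to uniqueness, since you explicitly dispose of the ``up to additive constants'' ambiguity via the homomorphism argument on the compact torus.
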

\begin{proof}
To see that $\psi_1,\ldots,\psi_m$ are $(S^1)^n$-invariant, note \eqref{eq:WeakEqMeasure1}-\eqref{eq:WeakEqMeasure2} are $(S^1)^n$-invariant. By uniqueness of solutions (Theorem~1 in \cite{H21a}), any solution has to be $(S^1)^n$-invariant. This proves the first part of the lemma. The second part follows from the first part, \eqref{eq:EnergyAndEqPotentials} and \eqref{eq:ToricMAEnergy}.
\end{proof}

Before we move on we will prove the following lemma, explaining the significance of the integrability condition in Proposition~\ref{prop:ToricLConv}.
\begin{lemma}
\label{lem:Finite1stMoment}
Assume $\mu$ is a probability measure on $X$ that does not charge pluripolar sets. Let $s$ be a section of a toric line bundle over $X$ which vanishes on all $(\mathbb C^*)^n$-invariant divisors of $X$ (equivalently, $s$ corresponds to a point in the interior of the associated polytope). Let $|\cdot|$ be a $(S^1)^n$-invariant metric on the line bundle of $s$. Then $\log|s|$ is integrable with respect to $\mu$ if and only if the push forward measure $\pi_\# \mu$ on $\mathbb R^n$ has finite first moment.
\end{lemma}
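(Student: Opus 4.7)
The plan is to reduce the question to a direct estimate on $\bbR^n$ in logarithmic coordinates, using that the integrand $\log|s|$ is comparable to $-|x|$ at infinity precisely because $p$ is an interior lattice point of the polytope.

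First I would observe that since $\mu$ does not charge pluripolar sets and the complement $X\setminus (\bbC^*)^n$ of the dense orbit is a proper analytic subvariety (hence pluripolar), we have $\mu(X\setminus(\bbC^*)^n)=0$. The integral $\int_X \log|s|\,\mu$ therefore equals the integral over $(\bbC^*)^n$, and by $(S^1)^n$-invariance of the integrand it descends via the log map $\pi\colon(\bbC^*)^n\to\bbR^n$ to an integral against the push-forward measure:
\[ \int_X \log|s|\,\mu \;=\; \int_{\bbR^n} F(x)\,\pi_\#\mu(x), \]
where $F$ is the $(S^1)^n$-invariant function representing $\log|s|$ on the dense orbit. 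The lemma is thus a statement about the integrability of $F$ against $\pi_\#\mu$.

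Next I would trivialize the line bundle using the canonical $(\bbC^*)^n$-invariant section over $(\bbC^*)^n$. In this trivialization $s$ is represented by the character $e^{\langle w,p\rangle}$ for the interior lattice point $p$ of the associated polytope $P$, so
\[ F(x) \;=\; \langle x,p\rangle - \phi(x), \]
where $\phi$ is the weight of $|\cdot|$ in this trivialization. Since $\phi$ is a continuous $(S^1)^n$-invariant weight on a toric line bundle, the standard growth estimate $|\phi-h_P|\le C$ on $\bbR^n$ recalled earlier in the paper applies, giving $F(x) = \langle x,p\rangle - h_P(x) + O(1)$.

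The geometric core of the argument is the fact that $p$ is interior to $P$. Pick $r>0$ with $\overline{B}(p,r)\subset P$ and let $R$ be an upper bound for $|q-p|$ as $q$ ranges over $P$. For every $x\in\bbR^n$,
\[ r|x| \;\le\; h_P(x)-\langle x,p\rangle \;=\; \sup_{q\in P}\langle x,q-p\rangle \;\le\; R|x|, \]
the lower bound by taking $q=p+rx/|x|\in \overline{B}(p,r)\subset P$ when $x\ne 0$, and the upper bound by Cauchy--Schwarz. Combining with the previous paragraph yields the sandwich
\[ -R|x|-C' \;\le\; F(x) \;\le\; -r|x|+C'. \]
In particular $F$ is bounded above, so its positive part is always $\pi_\#\mu$-integrable, and $F$ is $\pi_\#\mu$-integrable (equivalently $\int\log|s|\,\mu>-\infty$) if and only if $\int_{\bbR^n}|x|\,\pi_\#\mu(x)<\infty$. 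That is precisely the stated equivalence.

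The only step that requires any thought is the two-sided bound on $h_P(x)-\langle x,p\rangle$, and that is elementary convex geometry once the interior-point hypothesis is unpacked; the rest is bookkeeping that pluripolar sets carry no $\mu$-mass and that two continuous $(S^1)^n$-invariant weights on the same toric line bundle differ by a bounded function.
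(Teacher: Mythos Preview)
Your proof is correct and follows essentially the same route as the paper: trivialize on the dense orbit to write $\log|s|=\langle x,p\rangle-\phi(x)$, use the growth estimate $|\phi-h_P|\le C$, and exploit that $p$ is interior to $P$ to obtain the two-sided bound $r|x|\le h_P(x)-\langle x,p\rangle\le R|x|$. You are in fact a bit more explicit than the paper in two places---you spell out why the integral may be computed on $(\bbC^*)^n$ (the complement is pluripolar and $\mu$ does not charge pluripolar sets) and you justify the convex-geometric inequalities concretely---but the underlying argument is the same.
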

\begin{proof}
Let $L$ be the line bundle of $s$ and $P$ its polytope. Note that 
$$ |s|=e^{\langle x,p\rangle-\phi} $$
where $p$ is in the interior of $P$ and $\phi$ is a weight satisfying $|\phi-h_P]<C$ for some constant $C$. First of all, since $P$ is bounded, 
$$ -\log|s| = \phi(x)-\langle x, p\rangle \leq h_P(x)-\langle x, p\rangle+C \leq R|x|+C $$
for some $R>0$. Moreover, since $p$ is in the interior of $P$, 
$$ -\log|s| = \phi(x)- \langle x, p\rangle \geq h_P(x)-\langle x, p\rangle-C \geq r|x|-C$$
for some $r>0$ determined by the distance of $p$ to the boundary of $P$. It follows that
$$ r\int |x| \pi_\#(\omega^n) \leq -\int \log|s|\omega^n \leq R\int |x| \pi_\#(\omega^n). $$
This proves the lemma. 
\end{proof}


\subsection{A lower bound in terms of discrete optimal transport costs}\label{sec:LowerBndOT}
For two probability measure $\mu$ and $\nu$ on $\mathbb R^n$ we will let $C(\mu,\nu)$ denote the optimal transport cost when transporting $\mu$ to $\nu$ with respect to the cost function $c(x,y) = -\langle x,y \rangle$. In other words
$$ C(\mu,\nu) := \min_\gamma \int_{\mathbb R^n\times \mathbb R^n} c(x,y) \gamma $$
where the minimum is taken over all probability measures $\gamma$ on $\bbR^n\times \bbR^n$ whose marginals are given by $\mu$ and $\nu$ respectively. 

Assuming sufficient regularity of $\mu$ and $\nu$, for example compact support $P$ of $\nu$ and finite first moment of $\mu$, $C(\mu,\nu)$ coincides with its Kantorovich dual and the Kantorovich dual admits a maximizer, i.e.
\begin{equation}
\label{eq:KantCost}
C_{dual}(\mu,\nu) = \max_\phi \left(-\int_{\mathbb R^n} \phi \mu -\int_P \phi^* \nu \right)
\end{equation}
where the maximum is taken over all continuous functions $\phi:\mathbb R^n\rightarrow \mathbb R\cup\{\infty\}$ (\cite{V}, see the proof of Theorem~\ref{thm:ToricLConv} for details). Moreover, if we let $\mu_{eq}$ denote the push forward of the equilibrium measure
$$ \mu_{eq}  = \pi_\#(\mu_{(L_1,\ldots,L_m,\sum_{j=1}^m\phi_j)}) = \pi_\#(\mu_{(L_j,\psi_j)})$$
and put $\mu=\mu_{eq}$ and $\nu=\nu_j$ then $\psi_j$ is a maximizer in \eqref{eq:KantCost}. Consequently, using Lemma~\ref{lem:ToricEnergy}, $F(\phi)$ can be written as 
\begin{equation}
    F(\phi) = -\sum_{j=1}^m C(\mu_{eq},\nu_j) + \int_{\mathbb R^n} \phi \mu_{eq}.
    \label{eq:ToricEnergyCostForm}
\end{equation}

We will also be interested in the special case when $\mu$ and $\nu$ are given by a normalized sum of a finite number of point masses, i.e. when 
\begin{equation} \label{eq:DiscreteMeasures} \mu=\frac{1}{N} \sum_{i=1}^N \delta_{x_i} \text{ and } \nu= \frac{1}{N} \sum_{i=1}^N \delta_{p_i} \end{equation}
for some $N>0$ and point sets $\{x_1,\ldots,x_N\}$ and $\{p_1,\ldots,p_N\}$ in $\bbR^n$. Then any permutations $\sigma$ of the set $\{1,\ldots,N\}$ determines a measure $\gamma$ satisfying the push forward relations above, namely
$$ \gamma_\sigma = \sum_{i=1}^N \delta_{(x_i,p_{\sigma(i)})}, $$
and
$$ \int_{\mathbb R^n\times \mathbb R^n} c(x,y) \gamma_{\sigma} = -\frac{1}{N} \sum_{i=1}^N \langle x_i,p_{\sigma(i)} \rangle $$
Moreover, it can be shown that it suffices to consider $\gamma$ on this form, i.e. when $\mu$ and $\nu$ are given by \eqref{eq:DiscreteMeasures}
\begin{equation} \label{eq:DiscreteCost} C(\mu,\nu) = \min_\sigma -\frac{1}{N} \sum_{i=1}^N \langle x_i,p_{\sigma(i)} \rangle \end{equation}
where the minimum is taken over all permutations $\sigma$ of $\{1,\ldots,N\}$. 


For any vector $x=(x_1,\ldots,x_{N'})$ of length at least $N$, we will use $\delta^{N}(x)$ to the denote the measure $\frac{1}{N}\sum_{i=1}^N \delta_{x_i}$ which only depend of the first $N$ entries of $x$. The following lemma gives an asymptotic upper bound of $\mathcal L_k$ in terms of a sum of discrete optimal transport costs. 
\begin{lemma}
\label{lem:LowerBoundCost} Let $p^j$ and $C(\cdot,\cdot)$ be defined as above and assume $L_1^n=\ldots=L_m^n=1.$ 
Then, for any $x=(x_1,\ldots,x_{N})\in (\mathbb R^n)^{N}$
\begin{equation} \label{eq:LowerBoundCost} \mathcal L_k(\phi_1,\ldots,\phi_m) 
\leq 
\sum_{j=1}^m - C\left(\delta^{N_j}(x), \delta^{N_j}\left(\frac{p^j}{k}\right)\right) - \int_{\bbR^n} \sum_{j=1}^m\phi_j \delta^{N}(x) +o_k(1) 
\end{equation}
where $o_k(1)$ is a term which vanishes as $k\rightarrow \infty$.
\end{lemma}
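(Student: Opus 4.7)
Via Lemma~\ref{lem:ToricLk} I would reduce the inequality to a lower bound on
\[
I_k \;=\; \int_{((\bbC^*)^n)^N} \prod_{j=1}^m |D_j|^2_{\phi_j}\,(\omega^n)^{\otimes N},
\]
which I would establish by restricting the integration to a carefully chosen region and expanding the determinants as sums over permutations. Given the point $x\in(\bbR^n)^N$, the first step is to integrate only over $\prod_i B(x_i,\varepsilon_k)\times((S^1)^n)^N$, where the shrinking radius $\varepsilon_k\to 0$ (e.g.\ $\varepsilon_k=1/k$) is chosen slowly enough that continuity of each $\phi_j$ extracts the factor $e^{-2k\sum_{j,\,i\le N_j}\phi_j(x_i)}$ out of the integral up to a multiplicative error of $e^{o(kN)}$. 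What remains is a torus integral of the determinantal parts $\prod_j|\det(e^{\langle w_i,p_l^j\rangle})|^2$, and this is what must be estimated from below.

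Expanding each $|D_j|^2$ as a double sum over permutations $\sigma_j,\tau_j\in S_{N_j}$ and multiplying over $j$, the integral against Haar measure on the compact torus kills every term except the resonant pairs $(\vec\sigma,\vec\tau)$ satisfying $\sum_j(p^j_{\sigma_j(i)}-p^j_{\tau_j(i)})=0$ for each $i$. The diagonal subfamily $\vec\sigma=\vec\tau$ satisfies this with sign $+1$ and contributes the product of permanents $\prod_j\perm(e^{2\langle x_i,p_l^j\rangle})_{1\le i,l\le N_j}$, which is bounded below by
\[
\prod_j\exp\bigl(2\max_\sigma\sum_i\langle x_i,p^j_{\sigma(i)}\rangle\bigr)
\;=\;\exp\bigl(-2k\sum_j N_j\,C(\delta^{N_j}(x),\delta^{N_j}(p^j/k))\bigr),
\]
which is exactly the transport-cost term in the statement, after accounting for $N_j/N\to 1$.

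The main obstacle will be controlling the non-diagonal resonant pairs $(\vec\sigma,\vec\tau)\neq(\vec\sigma,\vec\sigma)$: their signs $\prod_j\sign(\sigma_j\tau_j)$ can be negative, so a priori they could cause destructive cancellation that ruins the diagonal lower bound. The strategy is to first perturb $x$ inside $\prod_iB(x_i,\varepsilon_k)$ to a generic configuration, so that Lemma~\ref{lem:UniqueOptMap} applies and guarantees that, for every $j$, the optimal matching $\sigma_j^*$ between $\delta^{N_j}(x)$ and $\delta^{N_j}(p^j/k)$ is unique; this produces a strict gap between the dominant exponent $\sum_i\langle x_i,p^j_{\sigma_j^*(i)}\rangle$ and any competing permutation. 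A combinatorial analysis of the resonance constraint $\sum_j(p^j_{\sigma_j(i)}-p^j_{\tau_j(i)})=0$, exploiting this gap together with the fact that the total number of resonant pairs grows only as $\exp(O(N\log N))=\exp(o(kN))$, then shows that every non-diagonal resonant contribution is negligible compared to the dominant diagonal term in the logarithm. Combining this cross-term control with the $\phi_j$-extraction and Lemma~\ref{lem:ToricLk} yields the stated inequality.
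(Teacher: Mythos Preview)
Your overall architecture---torus integration to isolate resonant permutation pairs, then an appeal to Lemma~\ref{lem:UniqueOptMap} for generic $x$---is the same as the paper's. But the step where you assert that the non-diagonal resonant contributions are ``negligible compared to the dominant diagonal term in the logarithm'' does not go through as written. After perturbing to a generic $x$, uniqueness of the optimal matchings does give a \emph{strictly} positive gap between the optimal exponent $2\sum_i\langle x_i,b_i\rangle$ and every suboptimal one, but you have no control whatsoever on the \emph{size} of that gap: it depends on $x$ and on $k$ (the exceptional hyperplanes in Lemma~\ref{lem:UniqueOptMap} depend on the lattice data $p^j$, hence on $k$), and there is no reason it should exceed $O(N\log N)$. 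Your subtraction scheme needs precisely that: bounding the non-diagonal part by $(\text{count})\cdot e^{\text{optimal}-2g}$ and asking this to be below the single diagonal term $e^{\text{optimal}}$ forces $g\gtrsim N\log N$. Without such a quantitative gap, the non-diagonal terms---whose signs you do not control---could in principle wipe out the dominant diagonal term.

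The paper avoids this entirely by regrouping the resonant pairs not as ``diagonal versus non-diagonal'' but by the common total vector $a\in(\bbZ^n)^N$ with $a_i=\sum_j p^j_{\sigma_j(i)}=\sum_j p^j_{\tau_j(i)}$. The coefficient of $e^{2\sum_i\langle x_i,a_i\rangle}$ is then
\[
C_a=\Bigl(\sum_{(\sigma_1,\ldots,\sigma_m)\in S_a}(-1)^{\sigma_1\cdots\sigma_m}\Bigr)^2\;\geq\;0,
\]
a perfect square (Lemma~\ref{lem:LkCollectedTerms}). Hence \emph{every} term in the expansion is non-negative, and the whole sum is trivially $\geq C_b\,e^{2\sum_i\langle x_i,b_i\rangle}$ for the optimal $b$. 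Lemma~\ref{lem:UniqueOptMap} is invoked only to ensure that for generic $x$ the set $S_b$ is a singleton, so $C_b=1$; continuity in $x$ then extends the bound to all $x$. No gap estimate or counting of cross-terms is needed. This squaring observation is the missing idea in your sketch; once you have it, the localization to balls and the separate treatment of non-diagonal terms become unnecessary.
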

Before proving this we will explain how it implies Proposition~\ref{prop:ToricLConv}. 
By Lemma~\ref{lem:LUpperBound}, Lemma~\ref{lem:LowerBoundCost} and \eqref{eq:ToricEnergyCostForm}, Proposition~\ref{prop:ToricLConv} will follow if we show that there exist some sequence $\{x_i\}_{i=1}^\infty$ in $\mathbb R^n$ such that 
$$ C\left(\delta^{N^j}(x_1,\ldots,x_{N_j}),\delta^{N_j}\left(\frac{p^j}{k}\right)\right) \rightarrow C(\mu_{eq},\nu_j) $$
for all $j\in \{1,\ldots,m\}$ and 
$$ \int_{\mathbb R^n} \phi \delta^{N}(x_1,\ldots,x_{N}) \rightarrow \int_{\mathbb R^n} \phi \mu_{eq}. $$
Existence of such a sequence will follow from topological properties of the Wasserstein $p$-space, in other words the space of probability measures on $\mathbb R^n$ with finite $p$'th moment endowed with the Wasserstein $p$-metric. The Wasserstein $p$-metric $W_p$ is defined as the $p$'th root of the optimal transport cost of moving one measure to another with respect to the cost function $c(x,y) = |x-y|^p$. The cruzial property we will use is that for any probability measure $\mu$ of finite $p$'th moment, there exist a sequence of points $\{x_i\}_{i=1}^\infty$ in $\mathbb R^n$ such that the empirical measures $\frac{1}{N}\sum_1^N \delta_{x_i}$ converges in Wasserstein $p$-distance to $\mu$ as $N\rightarrow \infty$. In fact, sampling points independently according to $\mu$ will produce such a sequence with probability~1 (see Section~2.2.2 and Proposition~2.2.6 in \cite{PanaretosZemel}). 

In the case when $\mu_{eq}$ has finite second moment the argument is quite simple. First of all, we may rewrite the right hand side of \eqref{eq:ToricEnergyCostForm} as 
\begin{equation}
    \label{eq:EnergyFinite2ndMoment}
    \sum_{j=1}^m \left(W^2(\mu_{eq},\nu_j) - \int_{\mathbb R^n} |x^2| \mu_{eq} - \int_{P_j} |p|^2\nu_j\right) - \int_{\mathbb R^n} \phi \mu_{eq} 
\end{equation}
and similarly the right hand side of \eqref{eq:LowerBoundCost} as 
\begin{equation}
    \label{eq:DiscreteCostFinite2ndMoment}
    \sum_{j=1}^m \left(W^2(\delta^{N_j}(x),\delta^{N_j}(p^j)) - \int_{\mathbb R^n} |x^2| \delta^{N_j}(x) - \int_{P_j} |p|^2\delta^{N_j}\left(\frac{p^j}{k}\right)\right) - \int_{\mathbb R^n} \phi \delta^{N}(x). 
\end{equation}
It is well known that convergence in $W_p$ is equivalent to weak convergence of measures and convergence of $p$'th moments. The latter also implies convergence of the pairing with any function bounded by a multiple of $|x|^p$. For each $k$, picking $x\in (\mathbb R^n)^N$ as the first $N$ entries $x=(x_1,\ldots,x_N)$ of a sequence $\{x_i\}_{i=1}^\infty$ with the property above, the expression in \eqref{eq:DiscreteCostFinite2ndMoment} converges to \eqref{eq:EnergyFinite2ndMoment}. 

In the case when $\mu_{eq}$ has finite first moment but infinite second moment, we will need the following lemma controlling the optimal cost $C(\mu,\nu)$ under perturbations of $\mu$ and $\nu$. To state it, we first introduce the following terminology: For two measures $\nu$ and $\nu'$, let $R(\nu,\nu')\in [0,\infty]$ be the quantity
\begin{equation} \label{eq:RDef} R(\nu,\nu') = \inf_\gamma \esssup_{\gamma}|p-q| \end{equation}
where $\esssup_{\gamma}|p-q|$ denotes the essential supremum of the function $(p,q)\mapsto |p-q|$ with respect to $\gamma$ and the infimum is taken over all probability measures $\gamma$ on $\mathbb R^n\times \mathbb R^n$ with margins $\nu$ and $\nu'$. Note that for those $\gamma$ that are induced by a map $T:\mathbb R^n\rightarrow \mathbb R^n$, the right hand side of \eqref{eq:RDef} takes the form $\esssup_{\nu}|p-T(p)|.$

\begin{lemma}
\label{lem:CLessThanW1}
Let $\mu, \mu', \nu$ and $\nu'$ be probability measures on $\mathbb R^n$. Assume that $\mu$ and $\mu'$ have finite first moment and $\nu$ and $\nu'$ have compact support. Then
$$ |C(\mu,\nu) -C(\mu',\nu)| \leq  W_1(\mu,\mu')\cdot\esssup_{\nu}|p| $$
and
$$ |C(\mu,\nu) -C(\mu,\nu')| \leq \int_{\mathbb R^n} |x| \mu \cdot R(\nu,\nu') $$
where $\esssup_\nu |p|$ denotes the essential supremum of the function $p\mapsto |p|$ with respect to $\nu$. 
\end{lemma}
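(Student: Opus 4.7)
The plan is to prove both inequalities by one and the same device: combine a (near-)optimal coupling of the unperturbed transport problem with a (near-)optimal coupling recording the perturbation of the marginal, then exploit the bilinear structure of the cost $c(x,y)=-\langle x,y\rangle$ to write the resulting cost discrepancy as an integral of an inner product that factorises cleanly into a distance in one variable and an $L^\infty$-bound in the other. The gluing lemma in optimal transport (see \cite{V}) is the tool that joins the two couplings into a single measure on a triple product from which the new coupling is read off as a marginal.

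For the first inequality, I would first choose an optimal coupling $\gamma$ for $C(\mu,\nu)$, which exists because $\nu$ has compact support and $\mu$ has finite first moment, together with an optimal $W_1$-coupling $\tau$ of $(\mu,\mu')$, which exists because $\mu, \mu'$ have finite first moment. By the gluing lemma there is a probability measure $\lambda$ on $\mathbb{R}^n\times \mathbb{R}^n\times\mathbb{R}^n$, with coordinates $(x,x',p)$, whose $(x,p)$-marginal is $\gamma$ and whose $(x,x')$-marginal is $\tau$. The $(x',p)$-marginal $\gamma'$ of $\lambda$ is a coupling of $(\mu',\nu)$, so
\begin{equation*}
C(\mu',\nu) \leq \int -\langle x',p\rangle\, d\gamma' = C(\mu,\nu) + \int \langle x-x',p\rangle\, d\lambda.
\end{equation*}
Since the $p$-marginal of $\lambda$ is $\nu$, one has $|p|\leq \esssup_\nu|p|$ for $\lambda$-a.e. $(x,x',p)$, so
\begin{equation*}
\left|\int \langle x-x',p\rangle\, d\lambda\right| \leq \esssup_\nu|p|\cdot \int|x-x'|\, d\tau = \esssup_\nu|p|\cdot W_1(\mu,\mu').
\end{equation*}
Swapping the roles of $\mu$ and $\mu'$ yields the matching lower bound.

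For the second inequality I would proceed identically, now gluing an optimal coupling $\gamma$ of $(\mu,\nu)$ with a coupling $\pi$ of $(\nu,\nu')$ chosen so that $\esssup_\pi|p-q|\leq R(\nu,\nu')+\epsilon$ into a measure $\lambda$ on $(x,p,q)$-space with $(x,p)$-marginal $\gamma$ and $(p,q)$-marginal $\pi$. Its $(x,q)$-marginal $\gamma'$ is a coupling of $(\mu,\nu')$, and the same two-line computation gives
\begin{equation*}
C(\mu,\nu') \leq C(\mu,\nu) - \int \langle x, q-p\rangle\, d\lambda \leq C(\mu,\nu) + (R(\nu,\nu')+\epsilon)\int|x|\, d\mu,
\end{equation*}
where the second inequality uses $|q-p|\leq R(\nu,\nu')+\epsilon$ on a set of full $\lambda$-measure. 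Letting $\epsilon\to 0$ and swapping $\nu\leftrightarrow\nu'$ completes the proof.

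I do not expect a serious obstacle: the argument is essentially formal once the bilinear structure of $c$ is noted, with both estimates reducing to a Hölder-type splitting $|\langle x-x',p\rangle|\leq|x-x'|\cdot|p|$ or $|\langle x,q-p\rangle|\leq|x|\cdot|q-p|$ integrated against $\lambda$. The one point requiring mild care is the passage from an essential supremum with respect to a marginal to an essential supremum with respect to $\lambda$, but this is immediate: if $A\subset\mathbb{R}^n$ has $\nu$-measure zero then $\{(x,x',p):p\in A\}$ has $\lambda$-measure zero.
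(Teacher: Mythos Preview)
Your proposal is correct and follows essentially the same approach as the paper: both proofs use the gluing lemma to combine an optimal transport coupling with a coupling recording the perturbation, then estimate the resulting inner-product discrepancy by a Cauchy--Schwarz/H\"older splitting. The only cosmetic difference is which side of the comparison you start from (you glue an optimal coupling for $C(\mu,\nu)$ to build a candidate for $C(\mu',\nu)$, whereas the paper glues an optimal coupling for $C(\mu',\nu)$ to build a candidate for $C(\mu,\nu)$), and that you handle the infimum in $R(\nu,\nu')$ via an $\epsilon$-argument while the paper works with an arbitrary coupling and takes the infimum at the end; these are equivalent.
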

\begin{proof}
To prove the first inequality, let $\gamma_{\mu,\mu'}$ be a probability measure on $\mathbb R^n \times \mathbb R^n$ with marginals $\mu$ and $\mu'$ such that
$$ W_1(\mu,\mu') = \int_{\mathbb R^n\times \mathbb R^n} |x-y| \gamma_{\mu,\mu'}(x,y) $$
and $\gamma_{\mu',\nu}$ be a probability measure on $\mathbb R^n \times \mathbb R^n$ with marginals $\mu'$ and $\nu$ such that
$$ C(\mu',\nu) = -\int_{\mathbb R^n\times\mathbb R^n} \langle y,p\rangle \gamma_{\mu',\nu}(y,p). $$
For $1\leq i<j\leq 3$, let $\pi_{ij}$ be the projection from $\mathbb R^n\times \mathbb R^n \times \mathbb R^n$ to the $i$'th and $j$'th factor. By the Gluing Lemma (see for example Chapter~1 in \cite{V}), there is a probability measure $\Gamma$ on $\mathbb R^n\times \mathbb R^n \times \mathbb R^n$ such that $(\pi_{12})_\# \Gamma=\gamma_{\mu,\mu'}$ and $(\pi_{23})_\#\Gamma=\gamma_{\mu',\nu}$. Let $\gamma_{\mu,\nu} = (\pi_{13})_\# \Gamma$ and note that the first and second marginals of $\gamma_{\mu,\nu}$ are given by $\mu$ and $\nu$, respectively, hence $\gamma_{\mu,\nu}$ is a transport plan from $\mu$ to $\nu$. We also note that 
$$\supp \Gamma \subset \mathbb R^n \times \mathbb R^n \times \supp \nu$$ 
This gives
\begin{eqnarray} 
C(\mu,\nu) - C(\mu',\nu) & \leq & -\int_{(\mathbb R^n)^2} \langle x,p\rangle \gamma_{\mu,\nu}(x,p) + \int_{(\mathbb R^n)^2} \langle y,p \rangle\gamma_{\mu',\nu}(y,p) \nonumber \\
& = & \int_{(\mathbb R^n)^3} \langle y-x,p\rangle \Gamma(x,y,p) \nonumber \\
& \leq & \int_{(\mathbb R^n)^3} |x-y|\cdot|p| \Gamma(x,y,p) \nonumber \\
& \leq & \int_{(\mathbb R^n)^3} |x-y|\Gamma(x,y,p)\cdot\esssup_{\nu}|p| \nonumber \\
& = & \int_{(\mathbb R^n)^2} |x-y|\gamma_{\mu,\mu'}(x,y)\cdot\esssup_{\nu}|p| \nonumber \\
& = & W_1(\mu,\mu')\cdot\esssup_{\nu}|p|. \nonumber 
\end{eqnarray}
Interchanging the roles of $\mu$ and $\mu'$ in the argument gives similarly
$$ C(\mu',\nu) - C(\mu,\nu) \leq W_1(\mu,\mu')\cdot\esssup_{\nu}|p|. $$
This proves the first inequality.

For the second inequality, let $\gamma_{\nu,\nu'}$ be a probability measure on $\mathbb R^n \times \mathbb R^n$ with marginals $\nu$ and $\nu'$ and $\gamma_{\mu,\nu'}$ be a probability measure on $\mathbb R^n \times \mathbb R^n$ with marginals $\mu$ and $\nu'$ such that
$$ C(\mu,\nu') = -\int_{\mathbb R^n\times \mathbb R^n} \langle x,q\rangle \gamma_{\mu,\nu'}(x,q). $$
As above, letting $\Gamma$ on $\mathbb R^n\times \mathbb R^n \times \mathbb R^n$ such that $(\pi_{13})_\# \Gamma=\gamma_{\mu,\nu'}$ and $(\pi_{12})_\#\Gamma=\gamma_{\nu,\nu'}$ and writing $\gamma_{\mu,\nu}=(\pi_{12})_\#\Gamma$ we get
\begin{eqnarray} 
C(\mu,\nu) - C(\mu,\nu') & \leq & -\int_{(\mathbb R^n)^2} \langle x,p\rangle \gamma_{\mu,\nu}(x,p) + \int_{(\mathbb R^n)^2} \langle x,q \rangle\gamma_{\mu',\nu}(x,q) \nonumber \\
& = & \int_{(\mathbb R^n)^3} \langle x,q-p\rangle \Gamma(x,p,q) \nonumber \\
& \leq & \int_{(\mathbb R^n)^3} |x|\cdot |p-q|\Gamma(x,p,q) \nonumber \\
& \leq & \int_{(\mathbb R^n)^3} |x| \Gamma(x,y,p)\cdot\esssup_{\gamma_{\nu,\nu'}}|p-q| \nonumber \\
& = & \int_{\mathbb R^n} |x|\mu\cdot\esssup_{\gamma_{\nu,\nu'}}|p-q|. \nonumber 
\end{eqnarray}
Since this holds for any $\gamma_{\nu,\nu'}$ with marginals $\nu$ and $\nu'$, this proves the lemma. 
\end{proof}

The significance of $R(\cdot,\cdot)$ for us is the following lemma: 
\begin{lemma}\label{lem:TransportMapConvergence}
Let $p^j\in (\mathbb R^n)^{N_j}$ and $\nu_j$ be defined as above. Then
$$R\left(\delta^{N_j}\left(\frac{p^j}{k}\right),\nu_j\right)\rightarrow 0$$
as $k\rightarrow \infty$.
\end{lemma}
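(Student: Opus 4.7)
The quantity $R$ defined by \eqref{eq:RDef} is the Wasserstein $\infty$-distance between probability measures on $\bbR^n$, so the plan is to produce, for each $k$, a coupling $\gamma_k$ between $\mu_k := \delta^{N_j}(p^j/k)$ and $\nu_j$ whose essential supremum of $|p-q|$ tends to $0$ as $k \to \infty$. Writing $\Lambda_k := \tfrac{1}{k}\bbZ^n \cap P_j$, so that $\mu_k = N_j^{-1}\sum_{p \in \Lambda_k}\delta_p$, I will construct a measurable partition $\{A_p\}_{p \in \Lambda_k}$ of $P_j$ such that $\nu_j(A_p) = 1/N_j$ and such that $A_p \cup \{p\}$ has diameter at most $\epsilon_k$ for some sequence $\epsilon_k \to 0$. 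The coupling $\gamma_k := \sum_{p \in \Lambda_k}\delta_p \otimes N_j\, \nu_j|_{A_p}$ then satisfies $\esssup_{\gamma_k}|p-q| \leq \epsilon_k$, which yields the lemma.

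The starting point is the partition by lattice cubes $A_p^{0} := (p + [0,1/k)^n) \cap P_j$. These cubes have diameter $\sqrt{n}/k$, and the family $\{A_p^0\}$ partitions $P_j$ up to a strip about $\partial P_j$ of width $\sqrt n/k$, which has $\nu_j$-measure $O(1/k)$. Moreover, for $p$ whose entire cube lies inside $P_j$, $\nu_j(A_p^0)$ equals $1/N_j$ up to a relative error $O(1/k)$.

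To upgrade $\{A_p^0\}$ to an exact partition with masses all equal to $1/N_j$, I plan to redistribute the boundary defects locally. Fix a small scale $\delta > 0$ (to be sent to $0$ after $k \to \infty$), cover a neighborhood of $\partial P_j$ by finitely many open balls $\{B_\alpha\}$ of radius $\delta$, and inside each $B_\alpha$ solve an auxiliary matching problem equalizing the $\mu_k$- and $\nu_j$-masses of the affected cells. Standard equidistribution gives $|\Lambda_k \cap B_\alpha|/(N_j\,\nu_j(B_\alpha)) \to 1$ uniformly in $\alpha$, so the residual imbalance inside each $B_\alpha$ can be absorbed within that same ball by moves of size $O(\delta)$; interior cells untouched by the redistribution retain diameter $O(1/k)$. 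A diagonal choice $\delta = \delta_k \to 0$ with $k\delta_k \to \infty$ then yields $\epsilon_k = O(\delta_k) \to 0$.

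The main technical obstacle is verifying the feasibility of the local matching, i.e.\ that the boundary imbalances can always be resolved within each $B_\alpha$ rather than requiring long-range transport between distant boundary regions. This reduces to the equidistribution statement above together with the fact that the total $\nu_j$-measure of the boundary strip is $O(1/k)$, both of which follow from the convex polyhedral geometry of $P_j$ and the uniform distribution of the lattice $\Lambda_k$ in $\bbR^n$.
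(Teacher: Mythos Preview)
Your route is genuinely different from the paper's. The paper produces the coupling via Monge--Amp\`ere/Brenier theory: for each $k$ it takes the Alexandrov solution $f_{j,k}$ of $\det(D^2 f)=\delta^{N_j}(p^j/k)$ with subdifferential image $P_j$, observes that the corresponding solution for the target measure $\nu_j$ has gradient equal to the identity on the interior of $P_j$, and then uses Arzel\`a--Ascoli together with weak stability of the real Monge--Amp\`ere operator to get locally uniform convergence of the potentials, hence uniform convergence of the gradients to the identity on the interior of $P_j$. The associated Brenier map is then a transport map from $\nu_j$ to $\delta^{N_j}(p^j/k)$ with displacement tending to $0$ in $L^\infty(\nu_j)$. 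The argument is short because the work is outsourced to standard convex-analysis facts.

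Your explicit-partition approach can certainly be made to work and is more elementary, but the redistribution step is a genuine gap rather than a routine detail. Two issues. First, the interior cubes have $\nu_j$-mass $(k^n\Vol(P_j))^{-1}$, differing from $1/N_j$ by a fixed-sign relative error of order $1/k$; you note this, yet then assert that interior cells are ``untouched by the redistribution'', which is inconsistent with all final cells having mass exactly $1/N_j$. That uniform interior surplus/deficit must flow somewhere. Second, and more seriously, the equidistribution statement $|\Lambda_k\cap B_\alpha|/(N_j\,\nu_j(B_\alpha))\to 1$ does \emph{not} by itself preclude long-range transport: each ball still carries a nonzero integer residual $|\Lambda_k\cap B_\alpha|-N_j\,\nu_j(B_\alpha)$, and in the $W_\infty$ metric a single unit of mass traversing $P_j$ ruins the bound, however small that unit is. What is actually needed is a cascading/flow argument across a mesoscale grid covering all of $P_j$ (equivalently, solving a discrete divergence equation $\nabla\cdot v=\mu_k-\nu_j$ with $\|v\|_\infty=O(1/k)$, or a Hall-marriage matching with neighborhoods of radius $O(1/k)$), showing that the residuals can be passed between \emph{adjacent} cells only. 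Such an argument exists and gives $R=O(1/k)$, but it is precisely the step your sketch asserts rather than carries out. The paper's PDE approach sidesteps this combinatorics entirely, at the cost of invoking Monge--Amp\`ere stability.
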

\begin{proof}
For each $k\geq 1$, let $f_{j,N}:\mathbb R^n\rightarrow \mathbb R$ be the unique Alexandrov solution to the real Monge-Ampère equation 
$$ \det\left(D^2 f\right) = \delta^{N_j}\left(\frac{p^j}{k}\right) $$
such that $\nabla f_{j,N}(\mathbb R^n) = P_j$ and $f_{j,N}(0) = 0$.  Similarly, let $f_j:\mathbb R^n\rightarrow \mathbb R$ be the unique solution to 
\begin{equation} \label{eq:IdentityEquation} \det\left(D^2 f\right) = \nu_j \end{equation}
such that $\nabla f_j = P_j$ and $f_j(0)=0$. Note that $f_j(p) = |p|^2/2$ on the interior of $P_j$. By the Arcela-Ascoli theorem, we may extract a subsequence of $\{f_{j,N}\}_{k=1}^\infty$ which converges locally uniformly. By continuity of the real Monge-Ampère operator, any limit point of this sequence has to satisfy \eqref{eq:IdentityEquation}. Consequently, $f_{j,N}\rightarrow f_N$ locally uniformly. In particular, this convergence is uniform on $P_j$. By standard properties of convex functions, this means $D f_{j,N} \rightarrow D f_j = id$ uniformly on the interior of $P_j$. Finally, since $P_j$ is convex, $\nu_j$ does not assign any mass to the boundary of $P_j$. Consequently, 
$$ \esssup_\nu |p-D f_{j,N}(p)| \rightarrow 0 $$
as $k\rightarrow \infty$. As $(Df_{j,N}^*)_\#\nu_j = \delta^{N_j}\left(\frac{p^j}{k}\right)$ we get that the induced transport plan $\gamma = (Df_{j,N}^*\times id)_\# \nu_j$ is a candidate in the infimum defining $R(\nu_j,\delta^{N_j}(p^j))$. This proves the lemma. 
\end{proof}

Using Lemma~\ref{lem:LowerBoundCost} we can now prove Proposition~\ref{prop:ToricLConv}.
\begin{proof}[Proof of Proposition~\ref{prop:ToricLConv}]
By Lemma~\ref{lem:Finite1stMoment}, the measure $\pi_\#(\omega^n)$ has finite first moment. 
As in the discussion after Lemma~\ref{lem:LowerBoundCost}, let $\{x_i\}_{i=1}^\infty$ be a sequence in $\mathbb R^n$ such that $\delta^{N}(x_1,\ldots,x_N) \rightarrow \mu_{eq}$ in the Wasserstein 1-metric. By Lemma~\ref{lem:LowerBoundCost}, 
\begin{eqnarray}
\limsup_{k\rightarrow \infty} \mathcal L_k(\phi_1,\ldots,\phi_m) & \leq & \limsup \sum_{j=1}^m -C\left(\delta^{N_j}(x_1,\ldots,x_{N_j}), \delta^{N_j}\left(\frac{p^j}{k}\right)\right) \nonumber \\
& & -\int_{\mathbb R^n} \sum_{j=1}^m \delta^{N}(x_1,\ldots,x_N)+o(k). \nonumber
\end{eqnarray}
The integral in the right hand side of this converges to $\int_{\mathbb R^n} \sum_{j=1}^m \phi_j \mu_{eq}$ since $\sum_{j=1}^m \phi_j(x)<C|x|$ for some $C$ that only depends on $L_1.\ldots,L_m$. Moreover, we claim that 
$$ C\left(\delta^{N_j}(x_1,\ldots,x_{N_j}), \delta^{N_j}\left(\frac{p^j}{k}\right)\right) \rightarrow C(\mu_{eq},\nu_j)$$ for each $j\in \{1,\ldots,m\}$. To see this, write 
\begin{eqnarray} 
& \left|C\left(\delta^{N_j}(x_1,\ldots,x_{N_j}), \delta^{N_j}\left(\frac{p^j}{k}\right)\right) - C(\mu_{eq},\nu_j)\right| & \nonumber \\
& \leq & \nonumber \\
& \left|C\left(\delta^{N_j}(x_1,\ldots,x_{N_j}), \delta^{N_j}\left(\frac{p^j}{k}\right)\right) - C\left(\mu_{eq}, \delta^{N_j}\left(\frac{p^j}{k}\right)\right)\right|  & \nonumber \\
& + & \nonumber \\
& \left|C\left(\mu_{eq}, \delta^{N_j}\left(\frac{p^j}{k}\right)\right)- C(\mu_{eq},\nu_j)\right| & \nonumber 
\end{eqnarray}
and note that by Lemma~\ref{lem:CLessThanW1}, the first term of this can be bounded by 
\begin{equation}
    \label{eq:muvariation}
    \esssup_{\delta^N(p^j)} |p| \cdot W_1(\delta^{N_j}(x_1,\ldots,x_{N_j}),\mu_{eq})
\end{equation}
and the second term can be bounded by
\begin{equation}
    \label{eq:nuvariation}
    \int_{\mathbb R^n} |x| \mu_{eq} \cdot R\left(\delta^{N_j}\left(\frac{p^j}{k}\right),\nu_j\right).
\end{equation}
The quantity in \eqref{eq:muvariation} vanish since $\supp \delta^{N_j}\left(\frac{p^j}{k}\right) \subset P_j$ for each $k$ and the quantity in \eqref{eq:nuvariation} vanishes by Lemma~\ref{lem:TransportMapConvergence}. 

Now, using \eqref{eq:ToricEnergyCostForm}, we get
$$ \limsup_{k\rightarrow \infty} \mathcal L_k(\phi_1,\ldots,\phi_m) \leq F\left(\sum_{j=1}^m \phi_j\right).$$
Together with Lemma~\ref{lem:LUpperBound}, this implies the first bullet in Theorem~\ref{thm:EqCond}.
\end{proof}
\begin{remark}
An essential point in the proof of Proposition~\ref{prop:ToricLConv} is the fact that we can find one sequence $\{x_i\}_{i=1}^\infty$ which works for all $j\in \{1,\ldots,m\}$. In this sense, the sequence $\{x_i\}_{i=1}^\infty$ can be thought of as a real analog of a mutual asymptotic Fekete sequence for $(L_1,\psi_1),\ldots,(L_m,\phi_m)$. 
\end{remark}
\begin{proof}[Proof of Theorem~\ref{thm:ToricLConv}]
By Theorem~2 in \cite{H21a}, the equilibrium measure $$\mu_{(L_1,\ldots,L_m,\sum_{j=1}^m \phi_j)}$$ is absolutely continuous with bounded density. This means the integrability assumption in Proposition~\ref{prop:ToricLConv} is satisfied, hence the first bullet in Theorem~\ref{thm:EqCond} holds as long as 
\begin{equation} 
\label{eq:VolumeAssumption}
L_1^n=\ldots=L_m^n = 1.
\end{equation}
Noting that the second bullet in Theorem~\ref{thm:EqCond} is invariant under scaling of $(L_j,\phi_j)$ for each $j\in \{1,\ldots,m\}$, we see that the condition in \eqref{eq:VolumeAssumption} can be dropped.
\end{proof}

\subsection{Expanding the determinants}
We now turn to the proof of Lemma~\ref{lem:LowerBoundCost}, beginning with a birds eye view of the argument. 
Expanding the determinants in \eqref{eq:ToricLk} we get a sum consisting of one term for each pair of vectors 
$$ (\sigma_1,\ldots,\sigma_m), (\sigma_1',\ldots,\sigma_m')$$ 
where each $\sigma_j$ and $\sigma_j'$ are elements in the group $S_{N_j}$ of permutations of $\{1,\ldots,N_j\}$. In the case $m=1$, where we get one term for each pair of permutations $\sigma_1,\sigma_1'$, integrating over the fibers of the map $\pi:(\mathbb C^*)^n\rightarrow \mathbb R^n$ eliminates all terms corresponding to pairs where $\sigma\not=\sigma'$. This is exploited in \cite{BPP} and \cite{HPP} and the result is a sum of \emph{positive} terms consisting of one term for each permutation $\sigma_1$. We will apply the same trick, but when $m>1$ we get a number of 'cross terms', some of them positive and some of them negative. The following lemma expands the determinants, integrates over the fibers of $\pi$ and rewrites the resulting sum into a sum of \emph{non-negative} terms. Before we state the lemma we note that if 
$$ x+\sqrt{-1}y = (x_1,\ldots,x_N)+\sqrt{-1}(y_1,\ldots,y_N)$$ 
are logarithmic coordinates on $((\mathbb C^*)^n)^N$ and $dx$ and $dy$ are the Lebesgue measures on $(\mathbb R^n)^N$ and $([0,2\pi]^n)^N$, respectively, then 
$$ (\omega^n)^{\otimes N} = fdx\otimes dy $$ 
where $f$ is a smooth function $((\mathbb C^*)^n)^N$ that only depends on $x$. 
\begin{lemma}\label{lem:LkCollectedTerms}
Given a vector $a=(a_1,\ldots,a_N) \in (\bbZ^n)^N$, let $S_a$ be the set of $(\sigma_1,\ldots,\sigma_m)\in \prod_{j=1}^m S_{N_j}$ such that
\begin{equation} \label{eq:adef} \sum_{j=1}^m p_{\sigma_j(i)} = a_i \end{equation}
for all $i\in \{1,\ldots,N\}$. Then
\begin{eqnarray} \int_{([0, 2\pi]^n)^N} \prod_{j=1}^m |D_j(x+\sqrt{-1}y)|^2_{\phi_j}dy & = & \sum_{a\in (\bbZ^n)^N} C_a e^{2\sum_{i=1}^N\left(\left\langle x_i,  a_i \right\rangle-k\phi(x_i)\right)} \nonumber \\ 
& & + o_k(N) 
\label{eq:LkCollectedTerms}
\end{eqnarray}
where 
$$ C_a := \left(\sum_{(\sigma_1,\ldots,\sigma_m)\in S_a} (-1)^{\sigma_1 \cdots \sigma_m}\right)^2 \geq 0. $$
\end{lemma}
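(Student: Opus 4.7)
My plan is to expand each determinant $D_j$ by the Leibniz formula, multiply the $m$ factors together, and then exploit orthogonality of characters: $\int_{[0,2\pi]^n} e^{\sqrt{-1}\langle y, v\rangle}\,dy = (2\pi)^n$ when $v = 0 \in \mathbb{Z}^n$ and vanishes otherwise. This kills every tuple of permutations except those for which, at each sample point $y_i$, the accumulated exponent vector vanishes. Reorganizing the surviving tuples by this vanishing condition produces a double sum that factors into a perfect square, immediately yielding $C_a \geq 0$.

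\textbf{Expanding and integrating.} Concretely, the Leibniz rule gives
$$D_j(x+\sqrt{-1}y) = \sum_{\sigma \in S_{N_j}} (-1)^\sigma \prod_{i=1}^{N_j} e^{\langle x_i+\sqrt{-1}y_i,\, p^j_{\sigma(i)}\rangle},$$
so that
$$|D_j|^2_{\phi_j} = \sum_{\sigma_j,\sigma_j' \in S_{N_j}} (-1)^{\sigma_j+\sigma_j'} \exp\left(\sum_{i=1}^{N_j}\left[\langle x_i, p^j_{\sigma_j(i)}+p^j_{\sigma_j'(i)}\rangle + \sqrt{-1}\langle y_i, p^j_{\sigma_j(i)}-p^j_{\sigma_j'(i)}\rangle - 2k\phi_j(x_i)\right]\right).$$
Taking the product over $j \in \{1,\ldots,m\}$ and regrouping the full exponent by $i$, the coefficient of $y_i$ becomes $v_i := \sum_{j \,:\, N_j \geq i}(p^j_{\sigma_j(i)}-p^j_{\sigma_j'(i)}) \in \mathbb{Z}^n$. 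Interchanging sum and integral (the sum is finite), integration over $([0,2\pi]^n)^N$ annihilates every tuple $((\sigma_j),(\sigma_j'))$ with some $v_i \neq 0$, so only the tuples satisfying $\sum_j p^j_{\sigma_j(i)} = \sum_j p^j_{\sigma_j'(i)}$ for all $i$ survive.

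\textbf{Collecting terms and the error.} For such tuples, $a_i := \sum_j p^j_{\sigma_j(i)}$ is well-defined and equals $\sum_j p^j_{\sigma_j'(i)}$. Partitioning the survivors by $a = (a_i) \in (\mathbb{Z}^n)^N$, the remaining $x$-exponent becomes $2\sum_{i}(\langle x_i, a_i\rangle - k\phi(x_i))$ and both $(\sigma_j)$ and $(\sigma_j')$ range independently over $S_a$, so that the signed double sum factors as
$$\sum_{(\sigma),(\sigma') \in S_a \times S_a} \prod_{j=1}^m (-1)^{\sigma_j+\sigma_j'} = \left(\sum_{(\sigma_1,\ldots,\sigma_m) \in S_a} (-1)^{\sigma_1\cdots\sigma_m}\right)^2 = C_a \geq 0.$$
The main technical point---and what I expect to be the principal obstacle---is the bookkeeping for the mismatch between $N_j$ and $N$: the $j$-th factor only involves indices $i \leq N_j$, so for $\min_j N_j < i \leq N$ some $\phi_j(x_i)$ and $p^j_{\sigma_j(i)}$ contributions are absent from both the weight and the constraint defining $S_a$. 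Since $N_j/N \to 1$ as $k \to \infty$, the band of affected indices has size $o(N)$, and the resulting discrepancy between the exact expansion and the clean form stated in the lemma is absorbed into the $o_k(N)$ error term.
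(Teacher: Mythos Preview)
Your proposal is correct and follows essentially the same route as the paper: Leibniz expansion of each determinant, orthogonality of characters under the $y$-integration to kill tuples with $\sum_j(p^j_{\sigma_j(i)}-p^j_{\sigma_j'(i)})\neq 0$, then regrouping the survivors by the common value $a_i=\sum_j p^j_{\sigma_j(i)}$ so that the signed double sum factors as the square $C_a$. The paper handles the $N_j<N$ mismatch by the convention $p^j_{\sigma_j(i)}=0$ for $i>N_j$ rather than tracking it explicitly, but your treatment of this discrepancy as the source of the $o_k(N)$ term is in the same spirit and arguably more transparent.
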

\begin{proof}
Expanding the determinants in \eqref{eq:ToricLk}, we get
\begin{eqnarray}
 & & \prod_{j=1}^m \left|\det\left(e^{\langle x_i+\sqrt{-1}y_i,p_l^j \rangle} \right)_{i,l}\right|^2 \nonumber \\
 & = &  \prod_{j=1}^m \left(\sum_{\sigma_j\in S_{N_j}}(-1)^{\sigma_j}\prod_{i=1}^{N_j} e^{\langle x_i+\sqrt{-1}y_i,p_{\sigma_j(i)}^j\rangle}\right)\overline{\left(\sum_{\sigma_j'\in S_{N_j}}(-1)^{\sigma_j'}\prod_{i=1}^{N_j} e^{\langle x_i+\sqrt{-1}y_i,p_{\sigma_j'(i)}^j\rangle}\right)}  \nonumber \\
& = & \sum_{\substack{\sigma_1,\sigma_1'\in S_{N_1} \\ \vdots \\ \sigma_m,\sigma_m'\in S_{N_m}}} (-1)^{\sigma_1 \sigma_1'\cdots \sigma_m \sigma_m'} \prod_{i=1}^N e^{\left\langle x_i, \sum_{j=1}^m \left(p_{\sigma_j(i)}^j+p_{\sigma_j'(i)}^j\right) \right\rangle + \sqrt{-1}\left\langle y_i, \sum_{j=1}^m \left(p_{\sigma_j(i)}^j-p_{\sigma_j'(i)}^j\right) \right\rangle}  \nonumber \\
& & \label{eq:DetExpanded}
\end{eqnarray}
where we have tacitly assumed $ p^j_{\sigma_j(i)} = p^j_{\sigma_j'(i)} = 0 $ if $i>N_j$. Integrating the $i$'th factor of the product in \eqref{eq:DetExpanded} in the variable $y_i$ on $[0,2\pi]$ we get 
$$ \begin{cases} 
2\pi e^{2\sum_{j=1}^m \left\langle x_i, p_{\sigma_j(i)}\right\rangle} & \text{ if } \sum_{j=1}^m \left(p_{\sigma_j(i)}-p_{\sigma_j'(i)}\right) = 0 \\
0 & \text{ otherwise.}
\end{cases} $$
Performing this integration for each $i\in \{1,\ldots,N\}$ we get that the left hand side of \eqref{eq:LkCollectedTerms} can be written as 
\begin{equation} \label{eq:RealIntegral} (2\pi)^N \sum_{\substack{\sigma_1,\sigma_1' \\ \vdots \\ \sigma_m,\sigma_m'}} (-1)^{\sigma_1 \sigma_1'\cdots \sigma_m \sigma_m'} \prod_{i=1}^N e^{2\sum_{j=1}^m\left\langle x_i,  p_{\sigma_j(i)} \right\rangle}  \end{equation}
where the sum is taken over all $\sigma_1,\sigma_1',\ldots,\sigma_m,\sigma_m'$ such that $\sigma_j,\sigma_j'\in S_{N_j}$ for each $j\in \{1,\ldots,m\}$ and 
\begin{equation} \label{eq:SigmaSigmaPrimeCond} \sum_{j=1}^m \left(p_{\sigma_j(i)}-p_{\sigma_j'(i)}\right) = 0 \end{equation}
for each $i\in \{1,\ldots,N\}$.

Note that \eqref{eq:SigmaSigmaPrimeCond} holds for all $i\in \{1,\ldots,N_j\}$ if and only if $(\sigma_1,\ldots,\sigma_m)\in S_a$ and $(\sigma_1',\ldots,\sigma_m')\in S_a$ for some $a\in (\mathbb Z^n)^N$. Noting also that 
$$ \sum_{\substack{(\sigma_1,\ldots,\sigma_m)\in S_a\\ (\sigma_1',\ldots,\sigma_m') \in S_a}} (-1)^{\sigma_1 \sigma_1'\cdots \sigma_m \sigma_m'} = \left(\sum_{(\sigma_1,\ldots,\sigma_m)\in S_a} (-1)^{\sigma_1 \cdots \sigma_m}\right)^2 $$
we get that \eqref{eq:RealIntegral} can be replaced by
$$ \sum_{a\in (\mathbb Z^n)^N} \left(\sum_{(\sigma_1,\ldots,\sigma_m)\in S_a} (-1)^{\sigma_1 \cdots \sigma_m}\right)^2 e^{2\sum_{i=1}^N \langle x_i,a_i \rangle}. $$
This proves the lemma. 
\end{proof}

Note that 
\begin{eqnarray}
\min_{\substack{a\in (\mathbb Z^n)^N \\ S_a \not= \emptyset}} -\sum_{i=1}^N\langle x_i,a_i\rangle
& = & \min_{\substack{(\sigma_1,\ldots,\sigma_m) \\ \in \\ \prod_{j=1}^m S_{N_j}}} -\sum_{j=1}^m\sum_{i=1}^{N_j}\langle x_i,p_{\sigma_j(i)}^j\rangle \nonumber \\
 & = & \sum_{j=1}^m\min_{\sigma\in S_{N_j}}-\sum_{i=1}^{N_j}\langle x_i,p_{\sigma(i)}^j\rangle  \nonumber \\
 & = & \sum_{j=1}^m kN_j\min_{\sigma\in S_{N_j}} -\frac{1}{N_j}\sum_{i=1}^{N_j}\left\langle x_i, \frac{p^j_{\sigma_j(i)}}{k}\right\rangle \nonumber \\ 
 & = & \sum_{j=1}^m kN_jC\left(\delta^{N_j}(x_1,\ldots,x_{N_j}),\delta^{N_j}\left(\frac{p^j}{k}\right)\right). \nonumber \\
 &  & \label{eq:mina}
\end{eqnarray}
Consequently, in order to prove Lemma~\ref{lem:LowerBoundCost}, we would like to bound the right hand side of \eqref{eq:LkCollectedTerms} from below by 
\begin{equation} \max_{a\in (\mathbb Z^n)^N, \, S_a \not= \emptyset} e^{2\sum_{i=1}^N\left(\langle x_i,a_i\rangle-k\phi(x_i)\right)}+o(N). \label{eq:MaxTerm}\end{equation}
Let $b\in (\mathbb Z^n)^N$ be an element where the max in \eqref{eq:MaxTerm} is attained. Then this bound follows directly as long as $C_b>0$. In particular, this is true when $S_b$ is a singleton. It will follow from the following lemma that $S_b$ is a singleton for almost all $x\in (\mathbb R^n)^N$.
\begin{lemma}
\label{lem:UniqueOptMap}
Let $p=(p_1,\ldots,p_{N})\in (\mathbb R^n)^{N}$ The set of $x=(x_1,\ldots,x_{N})\in (\mathbb R^n)^N$ such that the optimal transport problem from $\delta^{N}(x)$ to $\delta^{N}(p)$ with respect to the cost function $c(x,y)=-\langle x,y\rangle$ admits more than one optimal map is contained in a finite union of hyperplanes of $(\mathbb R^n)^N$. 
\end{lemma}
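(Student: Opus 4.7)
The plan rests on the fact that the assignment problem is linear in $x$. By the discussion around \eqref{eq:DiscreteCost}, the optimal transport cost from $\delta^N(x)$ to $\delta^N(p)$ with cost $c(x,y) = -\langle x, y\rangle$ equals the minimum over $\sigma \in S_N$ of the linear functional $\sigma \mapsto -\frac{1}{N}\sum_{i=1}^N \langle x_i, p_{\sigma(i)}\rangle$. Two permutations $\sigma$ and $\sigma'$ induce the same transport map precisely when $p_{\sigma(i)} = p_{\sigma'(i)}$ for every $i$, so the existence of two distinct optimal maps means there exist $\sigma, \sigma' \in S_N$ both attaining the minimum such that the vector
\[
v_{\sigma,\sigma'} := \left(p_{\sigma(i)} - p_{\sigma'(i)}\right)_{i=1}^N \in (\mathbb R^n)^N
\]
is nonzero.

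First I would observe that if $\sigma$ and $\sigma'$ both minimize, then equating their costs yields the linear constraint
\[
\sum_{i=1}^N \langle x_i,\, p_{\sigma(i)} - p_{\sigma'(i)}\rangle = 0
\]
on $x \in (\mathbb R^n)^N$. Whenever $v_{\sigma,\sigma'} \neq 0$, this equation cuts out a proper hyperplane $H_{\sigma,\sigma'} \subset (\mathbb R^n)^N$. I would then let
\[
H = \bigcup_{\substack{(\sigma,\sigma') \in S_N \times S_N \\ v_{\sigma,\sigma'} \neq 0}} H_{\sigma,\sigma'},
\]
which is a finite union since $|S_N \times S_N| = (N!)^2$. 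Any $x$ admitting more than one optimal map lies in $H$, which proves the claim.

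The only mild subtlety is the distinction between permutations and the maps they induce, which matters when some $p_i$'s coincide: pairs $(\sigma,\sigma')$ with $v_{\sigma,\sigma'} = 0$ correspond to the \emph{same} transport map and must be excluded from the union, but this is exactly what the condition $v_{\sigma,\sigma'} \neq 0$ ensures. I do not expect any genuine obstacle.
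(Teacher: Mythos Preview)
Your argument is correct and is essentially the same as the paper's: both observe that two optimal permutations force the linear relation $\sum_i \langle x_i, p_{\sigma(i)}-p_{\sigma'(i)}\rangle=0$ and take the finite union of the resulting hyperplanes over pairs of permutations. Your version is in fact slightly more careful than the paper's, since you explicitly exclude pairs with $v_{\sigma,\sigma'}=0$ (which would give all of $(\mathbb R^n)^N$ rather than a hyperplane); the paper implicitly assumes this but does not say it.
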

\begin{proof}
Assume the optimal transport problem from $\sum_{j=1}^m\delta_{x_i}$ to $\sum_{j=1}^m\delta_{p_i}$ admits more than one optimal map, i.e. there are two permutations, $\sigma_1$ and $\sigma_2$, of $\{1,\ldots,N\}$ such that 
$$ -\sum_{i=1}^N \langle x_i,p_{\sigma_1(i)} \rangle = -\sum_{i=1}^N \langle x_i,p_{\sigma_2(i)} \rangle= \inf_\sigma \left(-\sum_{j=1}^m\langle x_i,p_{\sigma(i)}\rangle\right). $$
This means $\sum_{i=1}^N \langle x_i,p_{\sigma_1(i)}-p_{\sigma_2(i)}\rangle=0$, hence that $(x_1,\ldots,x_N)$ lies in the finite union of hyperplanes given by
$$ \bigcup_{\sigma,\sigma'} \left\{(x_1,\ldots,x_N)\in X^N: \sum_{i=1}^N \langle x_i,p_{\sigma(i)}-p_{\sigma'(i)} \rangle = 0 \right\} $$
where the union is taken over all permutations $\sigma$ and $\sigma'$ of $\{1,\ldots,N\}$.
\end{proof}

\begin{proof}[Proof of Lemma~\ref{lem:LowerBoundCost}]
First of all, we claim that the right hand side of \eqref{eq:LkCollectedTerms} is bounded from below by \eqref{eq:MaxTerm}. By Lemma~\ref{lem:LkCollectedTerms}, it suffice to prove that $C_b>0$, where $b\in (\mathbb Z^n)^N$ is an element where the max in \eqref{eq:MaxTerm} is attained. By \eqref{eq:mina}, for any $(\sigma_1,\ldots,\sigma_m)\in S_b$ we have that $\sigma_j$ defines an optimal map from $\delta^{N_j}(x_1,\ldots,x_{N_j})$ to $\delta^{N_j}(p^j/k)$ for each $j\in\{1,\ldots,m\}$. By Lemma~\ref{lem:UniqueOptMap}, this completely determines $\sigma_j$ for all $x\in (\mathbb R^n)^N$ except for a finite union of hyperplanes. By continuity of the left hand side of \eqref{eq:LkCollectedTerms} with respect to $x$ (which can be established using the Dominated Convergence Theorem) and continuity of \eqref{eq:MaxTerm} with respect to $x$, the claim holds for all $x$.

Using that the right hand side of \eqref{eq:LkCollectedTerms} is bounded from below by \eqref{eq:MaxTerm}, it suffices to show that 
$$ \liminf_{k\rightarrow \infty} \frac{1}{kN} \log \int_{(\mathbb R^n)^N} \max_{a\in (\mathbb Z^n)^N, \, S_a \not= \emptyset} e^{2\sum_{i=1}^N\left(\langle x_i,a_i\rangle-k\phi(x_i)\right)} dx $$
is bounded from below by
$$  \liminf_{k\rightarrow \infty} \frac{1}{kN}\log \max_{a\in (\mathbb Z^n)^N, \, S_a \not= \emptyset} e^{2\sum_{i=1}^N\left(\langle x_i,a_i\rangle-k\phi(x_i)\right)} $$
for any $x\in (\mathbb R^n)^N$. This follows from the fact that 
$$ \max_{a\in (\mathbb Z^n)^N, \, S_a \not= \emptyset} e^{2\sum_{i=1}^N\left(\langle x_i,a_i\rangle-k\phi(x_i)\right)}$$ 
is subharmonic and a similar application of the meanvalue property as in the proof of Lemma~\ref{lem:L2LInf}. This proves the lemma. 
\end{proof}

\begin{proof}[Proof of Theorem~\ref{thm:ToricAsyFekete}]
Equation \eqref{eq:SameEqMeasures} is a necessary condition by Lemma~\ref{lem:NesCond}. By the second bullet in Theorem~\ref{thm:EqCond} and Theorem~\ref{thm:ToricLConv}, it is also sufficient. 
\end{proof}

\end{document}